\pdfoutput=1
\documentclass[10pt,a4paper,oneside]{amsart}
\usepackage{amsmath, amssymb, amsfonts}
\usepackage[a4paper]{geometry}

\numberwithin{equation}{section}

\usepackage{stmaryrd}
\newcommand{\llb}{\llbracket}
\newcommand{\rrb}{\rrbracket}

\renewcommand{\epsilon}{\varepsilon}
\renewcommand{\phi}{\varphi}
\renewcommand{\theta}{\vartheta}

\newcommand{\cent}{\mathfrak z}

\DeclareMathOperator{\Aut}{Aut}

\DeclareMathOperator{\Gal}{Gal}
\DeclareMathOperator{\GL}{GL}
\DeclareMathOperator{\id}{id}
\DeclareMathOperator{\ind}{ind}
\DeclareMathOperator{\Irr}{Irr}
\DeclareMathOperator{\res}{res}
\DeclareMathOperator{\Quot}{Quot}

\newcommand{\G}{\mathcal G}
\newcommand{\OO}{\mathcal O}
\newcommand{\Q}{\mathcal Q}

\newcommand{\QQ}{\mathbb Q}
\newcommand{\ZZ}{\mathbb Z}


\newcommand{\pt}{\delta_{\tauf}}
\newcommand{\ptq}{\delta_{\tau_W}}
\newcommand{\gcpf}{\gamma^{\prime\prime}_{F,\eta}}
\newcommand{\Gcpf}{\Gamma^{\prime\prime}_{F,\eta}}
\newcommand{\tD}{\widetilde D}
\newcommand{\tauf}{\tau_F}
\newcommand{\vf}{v_{F,\chi}}
\newcommand{\DFchi}{D_{F,\chi}}
\newcommand{\DWchi}{D_{W,\chi}}
\newcommand{\nFchi}{n_{F,\chi}}
\newcommand{\sFchi}{s_{F,\chi}}
\newcommand{\ff}{\mathbf f}
\newcommand{\epsilonFchi}{\epsilon_{F,\chi}}
\newcommand{\fetaj}{f_{F,\eta}^{(j)}}
\newcommand{\fWjetai}{f^{(j)}_{W,\eta_i}}
\newcommand{\ffFjetai}{\ff^{(j)}_{F,\eta_i}}
\newcommand{\ffFjeta}{\ff^{(j)}_{F,\eta}}
\newcommand{\ffFchijeta}{\ff^{(j)}_{F_\chi,\eta}}
\newcommand{\fWjeta}{f^{(j)}_{W,\eta}}
\newcommand{\Achi}{A_{F,\chi}}
\newcommand{\AAchi}{\mathcal A_{F,\chi}}

\usepackage{colonequals}

\usepackage{tikz}
\usetikzlibrary{cd}
\usetikzlibrary{arrows}
\usetikzlibrary{positioning}
\usetikzlibrary{calc}

\usepackage[style=alphabetic,backend=biber,isbn=false]{biblatex}
\addbibresource{bibliography.bib}

\frenchspacing

\usepackage{enumitem}
\newlist{theoremlist}{enumerate}{1}
\setlist[theoremlist]{label=(\roman{theoremlisti}), ref=\thetheorem.\roman{theoremlisti},noitemsep, topsep=.2ex}
\newlist{propositionlist}{enumerate}{1}
\setlist[propositionlist]{label={(\roman{propositionlisti})}, ref=\theproposition.\roman{propositionlisti},noitemsep, topsep=.2ex}
\newlist{lemmalist}{enumerate}{1}
\setlist[lemmalist]{label=(\roman{lemmalisti}), ref=\thelemma.\roman{lemmalisti},noitemsep, topsep=.2ex}

\usepackage{etoolbox} 

\usepackage{mathtools}


\usepackage[hidelinks,plainpages=false,pdfpagelabels]{hyperref}
\usepackage{thmtools} 
\usepackage[capitalise, noabbrev]{cleveref}

\theoremstyle{plain}
\newtheorem{theorem}{Theorem}[section]
\newtheorem{lemma}[theorem]{Lemma}
\newtheorem{proposition}[theorem]{Proposition}
\newtheorem{corollary}[theorem]{Corollary}

\theoremstyle{definition}

\theoremstyle{remark}
\newtheorem{remark}[theorem]{Remark}

\Crefname{lemma}{Lemma}{Lemmata}
\Crefname{proposition}{Proposition}{Propositions}
\Crefname{conjecture}{Conjecture}{Conjectures}
\Crefname{example}{Example}{Examples}
\Crefname{theoremlisti}{Theorem}{Theorems}
\Crefname{propositionlisti}{Proposition}{Propositions}
\Crefname{conjecturelisti}{Conjecture}{Conjectures}
\Crefname{lemmalisti}{Lemma}{Lemmata}


\title[On the Wedderburn decomposition of $\Q^ F(\G)$, part II]{On the Wedderburn decomposition of the total ring of quotients of certain Iwasawa algebras II}
\author{Ben Forrás}
\address{University of Ottawa\\
	Department of Mathematics and Statistics \\
    STEM Complex \\
	150 Louis-Pasteur Pvt\\
    Ottawa, ON \\
	Canada K1N 6N5}
\email{bforras@uottawa.ca}
\urladdr{https://bforras.eu}

\subjclass[2020]{16S35, 16W60, 11R23, 16H10} 
\keywords{Wedderburn~decomposition, Iwasawa algebra, skew~power~series~ring}
\date{Version of 2026-01-27}

\begin{document}

\begin{abstract}
Let $\G\simeq H\rtimes\Gamma$ be the semidirect product of a finite group $H$ and $\Gamma\simeq\ZZ_p$. Let $ F/\QQ_p$ be a finite extension with ring of integers $\OO_ F$. Then the total ring of quotients $\Q^ F(\G)$ of the completed group ring $\OO_{ F}\llb\G\rrb$ is semisimple artinian. 
We determine its Wedderburn decomposition in full generality in terms of the Wedderburn decomposition of the group ring $ F[H]$. 
Such a description was previously available only for those simple components for which a certain associated field extension is totally ramified.
\end{abstract}

\maketitle

\section{Introduction} \label{sec:introduction}
\subsection{Overview} \label{sec:overview}
Let $p$ be an odd prime number.
Let $\G= H\rtimes \Gamma$ be a profinite group that can be written as the semidirect product of a finite normal subgroup $H$ and a pro-$p$ group $\Gamma\simeq\ZZ_p$ isomorphic to the additive group of the $p$-adic integers. Let $ F$ be a finite extension of $\QQ_p$ with ring of integers $\OO_ F$, and consider the completed group ring
$\Lambda^{\OO_ F}(\G)\colonequals \OO_ F\llb \G\rrb$.
For $n_0$ sufficiently large, the subgroup $\Gamma_0\colonequals \Gamma^{p^{n_0}}$ is central in $\G$. 
Let $\Q^ F(\G)\colonequals \Quot(\Lambda^{\OO_ F}(\G))$ be the total ring of quotients of $\Lambda^{\OO_ F}(\G)$.

The ring $\Q^{ F}(\G)$ is a semisimple artinian $\Lambda^{\OO_F}(\Gamma_0)$-algebra, as shown by Ritter and Weiss \cite{TEIT-II}. As a semisimple algebra, it has a Wedderburn decomposition
\begin{equation} \label{eq:QFG-Wedderburn}
    \Q^ F(\G) \simeq \bigoplus_{\chi\in \Irr(\G)/\sim_ F} M_{\nFchi}(\DFchi)  .
\end{equation}
Here $\Irr(\G)$ denotes the set of irreducible characters of $\G$ with open kernel, and the equivalence relation $\sim_ F$ on $\Irr(\G)$ is defined as follows: two characters $\chi,\chi'$ are equivalent if there is a $\sigma\in\Gal( F_\chi/ F)$ such that ${}^\sigma(\res^\G_H \chi)=\res^\G_H \chi'$, where $ F_\chi\colonequals F(\chi(h):h\in H)$. For each equivalence class, we have a skew field $\DFchi$ of Schur index $\sFchi$.

The aim of this paper is providing a completely general description of the Wedderburn decomposition of $\Q^F(\G)$. In previous work of the author \cite{W}, such a description was achieved under an additional ramification hypothesis (see \cref{DchiW-bigoplus} for a precise statement). 
Prior to that work, the Wedderburn decomposition was only known in the cases when $\G$ is either a pro-$p$ group \cite{Lau}, or the $\Gamma$-action on $H$ is trivial (in which case $\Q^F(\G)=\Q^F(\Gamma)\otimes_F F[H]$). Partial results towards a general description were obtained by Nickel \cite{NickelConductor}. For a more thorough overview of these results, we refer to the Introduction of \cite{W}. 

An earlier version of the results contained in this work was originally part of a preprint version of \cite{W}, but these were subsequently reorganized into a separate manuscript. The relevant notions and results from that article shall be recalled below.

\subsection{Statement of results} \label{sec:statement-of-results}
Consider the group algebra $F[H]$, which is semisimple by Maschke's theorem. Therefore it admits a Wedderburn decomposition
\begin{equation} \label{eq:QpH-Wedderburn}
    F[H] \simeq \bigoplus_{\eta\in\Irr(H)/\sim_F} M_{n_\eta}(D_\eta).
\end{equation}
As before, two irreducible characters $\eta,\eta'\in\Irr(H)$ are equivalent if there exists some $\sigma\in\Gal(F(\eta)/F)$ such that ${}^\sigma\eta=\eta'$, where $F(\eta)\colonequals F(\eta(h):h\in H)$. On the right hand side, each $D_\eta$ is a finite dimensional skew field over its centre $F(\eta)$. In particular, these are finite dimensional skew fields over local fields and hence well understood using Hasse's theory, see \cite{Hasse} or \cite[\S31]{MO}. We shall describe the Wedderburn decomposition \eqref{eq:QFG-Wedderburn} in terms of the Wedderburn decomposition \eqref{eq:QpH-Wedderburn}.

Let us fix a topological generator $\gamma$ of $\Gamma$. We let $\Gamma$ and $\Gal( F(\eta)/ F)$ act on $\Irr(H)$: for $\eta\in\Irr(H)$, let ${}^{\gamma}\eta(h)=\eta(\gamma h\gamma^{-1})$ and ${}^\sigma\eta(h)=\sigma(\eta(h))$ for all $h\in H$ and $\sigma\in \Gal( F(\eta)/ F)$.
These two actions are related as follows: for $\chi\in\Irr(\G)$ and $\eta\mid\res^\G_H\chi$ an irreducible constituent of its restriction to $H$, we define $\vf$ to be the minimal positive exponent such that $\gamma^{\vf}$ acts as a Galois automorphism of $ F(\eta)/ F$ on $\eta$; it can be shown that $\vf$ depends only on $\chi$ and $F$. There is a unique automorphism $\tauf\in \Gal( F(\eta)/ F)$ such that 
\[{}^{\gamma^{\vf}}\eta={}^{\tauf}\eta  .\] 
Furthermore, this $\tauf$ fixes $ F_\chi$, {and} generates $\Gal( F(\eta)/ F_{\chi})$. Moreover, $\tauf$ admits a unique extension as an automorphism of $D_\eta$ of the same order, which we will also denote by $\tauf$.

Our main result is the following: 
\begin{theorem} \label{thm:main}
	Let $ F$ be a finite extension of $\QQ_p$, let $\chi\in\Irr(\G)$, and let $\eta\mid\res^\G_H\chi$ be an irreducible constituent. Then for the $\chi$-component in the Wedderburn decomposition \eqref{eq:QFG-Wedderburn},
	\begin{theoremlist}
		\item the size of the matrix ring is $\nFchi = n_\eta \vf$, \label{item:main-n}
		\item the Schur index of $\DFchi$ is $\sFchi = s_\eta (F(\eta):F_\chi)$, \label{item:main-s}
		\item the skew field is isomorphic to the total ring of quotients of a skew power series ring: $\DFchi \simeq \Quot\left(\OO_{D_\eta}[[X; \tauf, \tauf-\id]]\right)$. \label{item:main-D}
	\end{theoremlist}
\end{theorem}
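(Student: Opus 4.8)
The plan is to prove the three parts together, through a chain of reductions whose shape directly produces the numerical factors. Write $v\colonequals\vf$ and $m\colonequals(F(\eta):F_\chi)$. Since $\tauf$ generates $\Gal(F(\eta)/F_\chi)$ and $\gamma^{p^{n_1}}$ acts trivially on $\eta$, where $\Gamma^{p^{n_1}}$ is the $\G$-stabiliser of $\eta$, one has $v=p^{a}$ and $m=p^{\,n_1-a}$ for some $a\le n_1\le n_0$, and $m$ equals the order of $\tauf$. Put $\G_0\colonequals H\rtimes\Gamma^{v}$; inside $\G_0$ there is a chain of central subgroups $\Gamma_0\le\Gamma^{p^{n_1}}\le\Gamma^{v}$. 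The first step is a Clifford reduction: the primitive central idempotent $\epsilon_\eta\in F[H]$ cutting out the $D_\eta$-block of \eqref{eq:QpH-Wedderburn} is $\Gamma^{v}$-invariant --- and $v$ is the least exponent with this property --- while it is moved by $\Gamma$ as soon as $v>1$, and $\G_0$ is exactly its $\G$-stabiliser. Hence $\epsilon_\eta$ is central in $\Lambda^{\OO_F}(\G_0)$, and the Clifford correspondence in the form valid for these Iwasawa algebras (as in \cite{TEIT-II} and \cite{W}) identifies the $\chi$-component of $\Q^F(\G)$ with $M_{v}\bigl(\epsilon_\eta\Q^F(\G_0)\bigr)$. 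Thus it remains to show that the simple algebra $\epsilon_\eta\Q^F(\G_0)$ has matrix size $n_\eta$, skew field $\Quot(\OO_{D_\eta}[[X;\tauf,\tauf-\id]])$, and Schur index $s_\eta m$.

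Next I would carry out a dévissage over $\G_0$. As $\Gamma_0$ is central in $\G_0$ of finite index, $\Lambda^{\OO_F}(\G_0)\simeq\Lambda^{\OO_F}(\Gamma_0)\ast(\G_0/\Gamma_0)$, so writing $L\colonequals\Quot\Lambda^{\OO_F}(\Gamma_0)$ one gets $\epsilon_\eta\Q^F(\G_0)\simeq\epsilon_\eta\bigl(L[H]\ast(\Gamma^{v}/\Gamma_0)\bigr)$, where $\epsilon_\eta L[H]=M_{n_\eta}(D_\eta\langle T\rangle)$ with $D_\eta\langle T\rangle\colonequals\Quot(\OO_{D_\eta}[[T]])$ the skew field over $\Quot(\OO_{F(\eta)}[[T]])$ of Schur index $s_\eta$ (one uses that the Brauer group of $F(\eta)$ injects into that of this base). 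A generator of the cyclic group $\Gamma^{v}/\Gamma_0$ acts on $M_{n_\eta}(D_\eta\langle T\rangle)$ inducing precisely $\tauf$ on the centre $F(\eta)$ --- this is exactly what $v$ and $\tauf$ encode --- so by Skolem–Noether it equals the chosen extension of $\tauf$ to $D_\eta$, applied entrywise, composed with conjugation by a unit. Absorbing that unit introduces a twist $\epsilonFchi$; peeling off the central subgroup $\Gamma^{p^{n_1}}/\Gamma_0$, whose generator becomes a new central Iwasawa variable $\gamma_1$ with $\gamma_1^{\,p^{n_0-n_1}}=\gamma^{p^{n_0}}$; and applying a Morita equivalence pulling the matrix ring outside a skew power series ring over $\OO_{D_\eta}$, one obtains
\[
\epsilon_\eta\Q^F(\G_0)\;\simeq\;M_{n_\eta}\Bigl(\Quot\bigl(\OO_{D_\eta}[[X;\tauf,\tauf-\id]]\text{ twisted by }\epsilonFchi\bigr)\Bigr).
\]
In particular $\epsilon_\eta\Q^F(\G_0)$ has matrix size $n_\eta$, which together with the first step gives \ref{item:main-n}.

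It then remains to show that the twist $\epsilonFchi$ does not change the isomorphism type of the skew field, and to compute its Schur index. As both algebras are finite-dimensional division algebras over the same centre $\Quot(\OO_{F_\chi}[[\gamma_1-1]])$, it suffices to compare Brauer classes and Schur indices; the Schur index of $\Quot(\OO_{D_\eta}[[X;\tauf,\tauf-\id]])$ is computed directly to be $s_\eta m$, which is \ref{item:main-s} (and re-confirms \ref{item:main-n} by a dimension count). For the Brauer classes one checks that $\epsilonFchi$ may be taken to be a unit --- the Skolem–Noether unit can be chosen integral up to a central unit, since the $\Gamma^{v}$-action preserves an order --- and that a unit twist of the relevant cyclic class does not change the residues along the codimension-one primes of the base, so the classes agree and \ref{item:main-D} follows. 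This last point is the main obstacle: in general $\epsilonFchi$ is nontrivial and $F(\eta)/F_\chi$ is not totally ramified, which is precisely the configuration not treated in \cite{W}. The cleanest route seems to be to reduce it to that case: replacing $F$ by a suitable finite unramified extension $F'$ makes the field extension governing the hypothesis of \cite{W} totally ramified, so the Wedderburn decomposition of $\Q^{F'}(\G)\simeq F'\otimes_F\Q^F(\G)$ is known explicitly, and descending along $F'/F$ pins down $\DFchi$ --- the delicate part being that base change can simultaneously lower the Schur index and raise the matrix size, so the two must be disentangled using the structure theory of division algebras over Iwasawa fields developed in \cite{W}.
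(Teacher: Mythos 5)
Your overall strategy (Clifford reduction to the stabiliser $\G_0=H\rtimes\Gamma^{\vf}$ of $\epsilon_F(\eta)$, then a crossed-product/d\'evissage analysis) is a reasonable alternative to the paper's route, and its first step is sound: $\epsilon_F(\eta)\Q^F(\G)\epsilon_F(\eta)=\epsilon_F(\eta)\Q^F(\G_0)$ and the $\chi$-component is $M_{\vf}$ of this. But from there the argument has genuine gaps. First, your ``chain of central subgroups $\Gamma_0\le\Gamma^{p^{n_1}}\le\Gamma^{\vf}$'' inside $\G_0$ is false in general: the stabiliser of the \emph{character} $\eta$ need not centralise $H$, so $\gamma^{p^{n_1}}$ (let alone $\gamma^{\vf}$) is not central in $\G_0$; it only acts by an inner automorphism on the $\eta$-component, and making it central costs exactly the unit corrections (the $a_{F,i}$ of \cref{recall-multiplication}) whose effect you later call ``the twist''. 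Second, the deduction of \ref{item:main-n} is premature: from $\epsilon_F(\eta)\Q^F(\G_0)\simeq M_{n_\eta}(A)$ with $A$ a \emph{twisted} quotient of a skew power series ring you cannot read off matrix size $n_\eta$ unless you already know $A$ is a skew field -- which is precisely the unresolved question of whether the twisted crossed product has Schur index $s_\eta(F(\eta):F_\chi)$ rather than something smaller. So \ref{item:main-n} and \ref{item:main-s} hinge on the same open point as \ref{item:main-D}.

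Third, that open point is only waved at. The assertion that the Skolem--Noether unit can be absorbed because ``a unit twist of the relevant cyclic class does not change the residues along the codimension-one primes of the base'' is not an argument over these two-dimensional base fields (residues do not pin down the class, and no computation of either class is given); and your fallback -- base change to an unramified $F'$ so that the hypothesis of \cref{thm:indecomposability-W} holds, then descend -- is exactly where the paper's actual work lies, and you do not execute it. Concretely, the paper descends by forming the Galois-averaged idempotents $\ffFjetai=\sum_{\psi\in\Gal(W/F)}\psi(\fWjetai)$ over the maximal unramified subextension $W$ of $F(\eta)/F_\chi$, proving they are indecomposable in $\Q^F(\G)$ by solving the inversion equation $\psi$-componentwise over $W$ (\cref{prop:ff-E-j-eta-i}); this gives $\nFchi=n_\eta\vf$ by a composition-series count and $\sFchi$ via Nickel's formula; and \ref{item:main-D} is then obtained by choosing a representative $\chi^*$ with $F_{\chi^*}=F_\chi$ (\cref{DFchichi}) and writing down an explicit embedding $\Delta$ of $\DFchi$ into $\bigoplus_{d=0}^{f-1}\DWchi(\gcpf)^d$, forced to be onto by comparing Schur indices. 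The ``disentangling'' of index drop versus matrix-size growth under base change that you defer to ``the structure theory of \cite{W}'' is precisely this missing content, so as it stands the proposal does not prove any of the three assertions in the non--totally ramified case.
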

The ring {$\OO_{D_\eta}[[X; \tauf, \tauf-\id]]$} is the skew power series ring whose underlying additive group agrees with that of the power series ring $\OO_{D_\eta}[[X]]$, with multiplication rule {$Xd=\tauf(d)X+(\tauf-\id)(d)$} for all $d\in\OO_{D_\eta}$. Such skew power series rings were studied by Schneider and Venjakob \cite{VenjakobWPT,SchneiderVenjakob} in general. The kind of skew power series rings occurring in \cref{item:main-D} were investigated in \cite[\S3]{W}.

\subsection{Applications} The description of the skew fields in terms of skew power series allows one to describe all maximal orders in $\Q^F(\G)$, which will be done in \cref{cor:max-orders}. This has applications in non-commutative Iwasawa theory. Indeed, employing Venjakob's Weierstraß theory for skew power series rings, it is possible to verify integrality of certain characteristic elements attached to number fields \cite{epac} or to elliptic curves \cite{FM}.

\subsection{Outline} We begin by recalling the relevant results of \cite{W} in \cref{sec:preliminaries}. In \cref{sec:indecomposables-QF}, we construct a system of indecomposable idempotents in $\Q^F(\G)$, which are then used in \cref{sec:description} to obtain an explicit description of the skew fields $\DFchi$. A set of examples in which the extension $F(\eta)/F_\chi$ is unramified is constructed in \cref{sec:unramified}. The case of $p=2$ is discussed in \cref{sec:2adic}.

\subsection{Notation and conventions} The number $p$ is an odd rational prime, except in \cref{sec:2adic}. We will abuse notation by writing $\oplus$ for a direct product of rings, even though this is not a coproduct in the category of rings. We denote the centre of a ring $R$ by $\cent(R)$. The total ring of quotients $\Quot(R)$ of a ring $R$ is obtained from $R$ by inverting all central regular elements.

Many objects depend on the base field $F$, and several proofs involve a base change. This necessitates carrying the base field around in our notation as a subscript. Note that in \cite{W}, the base field appears in the superscript instead; aside from that, our notation is entirely congruous with the one used in that work.

\subsection{Acknowledgements} 
The author thanks Andreas Nickel for his advice and numerous helpful discussions. He also thanks the anonymous referee of \cite{W} for pointing out mistakes in an earlier version of the proof and for several constructive comments.

This work was partially funded by the Deutsche Forschungsgemeinschaft (DFG, German
Research Foundation) under project number 559516518.

\section{Preliminaries} \label{sec:preliminaries}
In \cite{W}, \cref{thm:main} was established under the condition that $F(\eta)/F_\chi$ is totally ramified. This is not always the case: see \cref{sec:unramified} for an example. In this section, for the convenience of the reader, we briefly recall some of the results of \cite{W} which will be used later on. We retain the notation introduced in \cref{sec:introduction}. In particular, $\chi\in\Irr(\G)$ is an irreducible character of $\G$ with open kernel, and $\eta\mid\res^\G_H\chi$ is an irreducible constituent of its restriction to $H$.

\subsection{Galois action on irreducible characters}
Let $w_\chi$ be the index of the stabiliser of $\eta$ in $\Gamma$; this depends only on $\chi$ and not on $\eta$ or $F$.
Let $\vf$ be as in \cref{sec:statement-of-results}, that is,
\begin{equation} \label{eq:def-v-chi}
    \vf=\min\left\{0<i : \exists \tau\in\Gal(F(\eta)/F), {}^{\gamma^{i}}\eta={}^\tau \eta\right\}.
\end{equation}
It is clear that $\vf\mid w_\chi$.
\begin{proposition}[{\cite[\S4.2]{W}}]
    Let $\chi\in\Irr(\G)$ and let $\eta\mid\res^\G_H\chi$ be an irreducible constituent thereof. Then the following hold.
    \begin{propositionlist}
        \item The number $\vf$ depends only on $F$ and $\chi$ and not on $\eta$.
        \item The extension $\Gal(F(\eta)/F_\chi)$ is cyclic of order $w_\chi/\vf$. In particular, $(F(\eta):F_\chi)=w_\chi/\vf$. \label{item:Feta-Fchi-degree}
        \item There is a unique automorphism $\tau=\tauf\in\Gal(F(\eta)/F)$ as in \eqref{eq:def-v-chi}, that is, satisfying ${}^{\gamma^{i}}\eta={}^{\tauf} \eta$. This $\tauf$ fixes $F_\chi$ and generates $\Gal(F(\eta)/F_\chi)$.
        \item There is a collection of $\vf$ many irreducible constituents $\eta_1,\ldots,\eta_{\vf}\in\Irr(H)$ of $\res^\G_H\chi$ such that the sum over their Galois orbits recovers $\res^\G_H\chi$, that is, such that
        \begin{equation} \label{eq:res-eta-i}
            \res^\G_H \chi = \sum_{i=1}^{\vf} \sum_{\sigma\in\Gal(F(\eta)/F_\chi)} {}^\sigma \eta_i.
        \end{equation} \label{item:res-G-H-E}
        \item For any intermediate field extension $F_\chi\subseteq E\subseteq F(\eta)$, we have $\tau^E=(\tau^F)^{(E:F_\chi)}$ and $v_\chi^E=v_\chi^F{(E:F_\chi)}$. \label{item:change-tau}
    \end{propositionlist}
\end{proposition}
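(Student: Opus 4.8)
The strategy is to reduce all five assertions to a single structural fact: a canonical identification of $\Gal(F(\eta)/F_\chi)$ with a subgroup of $\Gamma/\mathrm{Stab}_\Gamma(\eta)\cong\ZZ/w_\chi\ZZ$. First I would fix the inputs. The conjugation action of $\Gamma\cong\ZZ_p$ on the finite set $\Irr(H)$ factors through $\Aut(H)$, so $\mathrm{Stab}_\Gamma(\eta)$ is open of $p$-power index, namely $w_\chi$; hence $\mathrm{Stab}_\Gamma(\eta)=\Gamma^{w_\chi}$ and $k\mapsto{}^{\gamma^k}\eta$ is a bijection of $\ZZ/w_\chi\ZZ$ onto the $\Gamma$-orbit of $\eta$. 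Since $\chi$ has open kernel, Clifford theory — using that $\mathrm{Stab}_\Gamma(\eta)$ is procyclic, so that $\eta$ extends to $H\rtimes\mathrm{Stab}_\Gamma(\eta)$ and $\chi$ is induced from such an extension — gives $\res^\G_H\chi=\sum_{j=0}^{w_\chi-1}{}^{\gamma^j}\eta$ with multiplicity one. Finally, conjugate characters have equal value fields, so $F({}^{\gamma^j}\eta)=F(\eta)$ and $F_\chi\subseteq F(\eta)$, and the action of $\Gamma$ on $\Irr(H)$ commutes with that of $\Gal(F(\eta)/F)$.

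The engine of the proof is the map
\[
\Phi\colon\Gal(F(\eta)/F_\chi)\longrightarrow\ZZ/w_\chi\ZZ,\qquad{}^\sigma\eta={}^{\gamma^{\Phi(\sigma)}}\eta .
\]
It is well defined on precisely this group: $\sigma\in\Gal(F(\eta)/F)$ fixes $F_\chi$ iff ${}^\sigma$ fixes each value $\res^\G_H\chi(h)=\sum_j{}^{\gamma^j}\eta(h)$, iff ${}^\sigma(\res^\G_H\chi)=\res^\G_H\chi$; and ${}^\sigma(\res^\G_H\chi)=\sum_j{}^{\gamma^j}({}^\sigma\eta)$ is the multiplicity-one $\Gamma$-orbit sum of ${}^\sigma\eta$, which coincides with that of $\eta$ exactly when ${}^\sigma\eta\in\Gamma\cdot\eta$. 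The commuting-actions property makes $\Phi$ a homomorphism; it is injective because $\Phi(\sigma)=0$ forces ${}^\sigma\eta=\eta$, hence $\sigma=\id$ on $F(\eta)$; and its image is $\{k\bmod w_\chi:\exists\,\tau\in\Gal(F(\eta)/F),\,{}^{\gamma^k}\eta={}^\tau\eta\}$, where any such $\tau$ automatically fixes $F_\chi$. Since $\{k\in\ZZ:\exists\,\tau,\,{}^{\gamma^k}\eta={}^\tau\eta\}$ is a subgroup of $\ZZ$ (again by the commuting actions) containing $w_\chi$, it equals $\vf\ZZ$ by \eqref{eq:def-v-chi}, so the image of $\Phi$ is $\vf\ZZ/w_\chi\ZZ$. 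This gives \cref{item:Feta-Fchi-degree} at once, and it produces $\tauf:=\Phi^{-1}(\vf)$, the unique element of $\Gal(F(\eta)/F)$ with ${}^{\gamma^{\vf}}\eta={}^{\tauf}\eta$, which necessarily fixes $F_\chi$ and generates $\Gal(F(\eta)/F_\chi)$ because $\vf$ generates the image.

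The remaining parts are bookkeeping. For independence of $\vf$ from $\eta$: any other constituent of $\res^\G_H\chi$ is ${}^{\gamma^c}\eta$, and replacing $\eta$ by it shifts the set in \eqref{eq:def-v-chi} by $c$ (as $\Gamma$ and $\Gal$ commute), leaving its least positive element unchanged. For \eqref{eq:res-eta-i}: put $\eta_i:={}^{\gamma^{i-1}}\eta$ for $1\le i\le\vf$; iterating ${}^{\gamma^{\vf}}\eta={}^{\tauf}\eta$ gives ${}^{\tauf^l}\eta={}^{\gamma^{l\vf}}\eta$, whence ${}^{\tauf^l}\eta_i={}^{\gamma^{(i-1)+l\vf}}\eta$, and as $(i,l)$ ranges over $\{1,\dots,\vf\}\times\{0,\dots,w_\chi/\vf-1\}$ these enumerate the $\Gamma$-orbit of $\eta$ exactly once, summing to $\res^\G_H\chi$. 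For \cref{item:change-tau}: an intermediate field $F_\chi\subseteq E\subseteq F(\eta)$ corresponds to the subgroup $\langle\tauf^{(E:F_\chi)}\rangle$ of $\langle\tauf\rangle=\Gal(F(\eta)/F_\chi)$, and an exponent $i$ satisfies the condition defining $v_\chi^E$ iff the unique $\tau$ with ${}^{\gamma^i}\eta={}^\tau\eta$, namely $\tauf^{i/\vf}$, lies in that subgroup, i.e.\ iff $(E:F_\chi)\vf\mid i$; this gives $v_\chi^E=(E:F_\chi)\vf$ and $\tau^E=\tauf^{(E:F_\chi)}$.

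The one step that is not pure formalism is the Clifford-theoretic normalisation used above: that $\res^\G_H\chi$ is multiplicity-free, so that the $\Gamma$-orbit of $\eta$ is recoverable from its character sum — this is what makes $\Phi$ well defined, and it rests on the procyclicity of $\mathrm{Stab}_\Gamma(\eta)$, which kills the Schur-multiplier obstruction in Clifford's theorem. I would isolate it as a preliminary lemma; everything after it is manipulation of the two commuting actions, and, notably, no hypothesis on the ramification of $F(\eta)/F_\chi$ enters at any point.
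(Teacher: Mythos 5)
This proposition is merely recalled in the paper from \cite[\S4.2]{W} and no proof of it is given here, so there is no internal argument to compare against; judged on its own terms, your proof is correct and essentially complete. Your route --- the homomorphism $\Phi\colon\Gal(F(\eta)/F_\chi)\to\ZZ/w_\chi\ZZ$ defined by ${}^\sigma\eta={}^{\gamma^{\Phi(\sigma)}}\eta$, shown to be well defined, injective, and to have image $\vf\ZZ/w_\chi\ZZ$ --- is the natural Clifford-theoretic argument, and you correctly isolate the one non-formal ingredient: the multiplicity-one decomposition $\res^\G_H\chi=\sum_{j=0}^{w_\chi-1}{}^{\gamma^j}\eta$, which holds because $\chi$ has open kernel and $\G/H\simeq\ZZ_p$ is procyclic (so everything factors through a finite quotient with cyclic top and the extension/obstruction problem is trivial); this is exactly the fact the paper itself quotes from Ritter--Weiss and Nickel, and it does deserve to be stated as a preliminary lemma with such a citation. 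The well-definedness of $\Phi$ via linear independence of irreducible characters, the subgroup argument identifying the image with $\vf\ZZ/w_\chi\ZZ$ (which gives (ii) and the unique $\tauf$ generating $\Gal(F(\eta)/F_\chi)$), the choice $\eta_i={}^{\gamma^{i-1}}\eta$ establishing \eqref{eq:res-eta-i}, and the Galois-correspondence computation of $v^E_\chi=(E:F_\chi)\vf$ and $\tau^E=\tauf^{(E:F_\chi)}$ all check out, and indeed no ramification hypothesis is needed anywhere.

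One sentence does need repair, although the fix is immediate: in your argument for (i) you assert that replacing $\eta$ by another constituent ${}^{\gamma^c}\eta$ \emph{shifts} the set in \eqref{eq:def-v-chi} by $c$, and that the least positive element is unchanged; if the set were literally translated by $c$, its minimum would in general change, so the sentence as written is internally inconsistent. The correct statement, which follows from the same commuting-actions observation you invoke, is that the set is \emph{unchanged}: ${}^{\gamma^i}\bigl({}^{\gamma^c}\eta\bigr)={}^{\tau}\bigl({}^{\gamma^c}\eta\bigr)$ is equivalent to ${}^{\gamma^{i+c}}\eta={}^{\gamma^c}\bigl({}^{\tau}\eta\bigr)$, hence to ${}^{\gamma^{i}}\eta={}^{\tau}\eta$, so the defining set for ${}^{\gamma^c}\eta$ coincides with that for $\eta$ and (i) follows directly. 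With that sentence corrected, the proposal stands.
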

Note that the field $F(\eta)$ is the same for each irreducible constituent $\eta$ of $\res^\G_H\chi$, so the right hand side of \eqref{eq:res-eta-i} above makes sense.

Consider the $\eta$-component of the Wedderburn decomposition \eqref{eq:QpH-Wedderburn} of $F[H]$. The skew field part of this is $D_\eta$, which is a finite dimensional skew field over the local field $F(\eta)$. By Hasse's theory \cite[\S14]{MO}, there are elements $\omega\in D_\eta$ and $\pi\in D_\eta$ such that $F(\eta)(\omega)$ resp. $F(\eta)(\pi)$ are unramified resp. totally ramified extensions of $F(\eta)$, and both are maximal subfields of $D_\eta$. Then the following holds; see also the end of \S4.2 of \cite{W}.
\begin{proposition}[{\cite[Proposition~2.7]{W}}] \label{extending-tau}
	The automorphism $\tau\in\Gal(F(\eta)/F_\chi)$ admits a unique extension to $D_\eta$ of order $w_\chi/\vf$ such that $\tau(F(\eta)(\omega))=F(\eta)(\omega)$ and $\tau(F(\eta)(\pi))=F(\eta)(\pi)$.
\end{proposition}

\subsection{Primitive central idempotents}
Recall the definitions of the following idempotents employed by Ritter--Weiss \cite{TEIT-II} and Nickel \cite{NickelConductor}. For $\chi$ and $\eta$ as above, define
\begin{align*}
	e(\eta) &\colonequals \frac{\eta(1)}{\#H} \sum_{h\in H} \eta(h^{-1}) h \in  \QQ_p(\eta)[H] \subseteq F(\eta)[H]  , &
	e_{\chi} &\colonequals \sum_{g\in\G/\G_\eta} e({}^g\eta) \in \QQ_p(\eta)[H] \subseteq F(\eta)[H] .
\end{align*}
These are primitive central idempotents in $F(\eta)[H]$ and $\Q^{F(\eta)}(\G)$, respectively; see \cite[Proposition~6(2)]{TEIT-II} for the latter claim. As we wish to work with coefficients in $F$ rather than in $F(\eta)$, we consider the following modifications:
\begin{align} \label{eq:def-epsilon}
	\epsilon_F(\eta) &\colonequals \sum_{\sigma\in\Gal( F(\eta)/ F)} e({}^\sigma\eta) \in  F[H], &
	\epsilon_{F,\chi} &\colonequals \sum_{\sigma\in\Gal( F_{\chi}/ F)} \sigma(e_{\chi}) \in  F[H].
\end{align}
These are primitive central idempotents in $F[H]$ and $\Q^F(\G)$, respectively. Therefore $F[H]\epsilon_F(\eta)\simeq M_{n_\eta}(D_\eta)$ and $\Q^F(\G)\epsilon_{F,\chi}\simeq M_{n_\chi}(D_\chi)$.

As a consequence of \cref{item:res-G-H-E} (see also \cite[Lemma~1.1]{NickelConductor}), we obtain the following:
\begin{align} \label{eq:epsilon-chi-etas}
	\epsilon_{F,\chi} &= {\sum_{i=1}^{\vf}} \epsilon_F(\eta_i)  .
\end{align}

\subsection{Indecomposable idempotents} \label{sec:indecomposables}
Fix a Wedderburn isomorphism as in \eqref{eq:QpH-Wedderburn}. Let $\fetaj\in F[H]$ be the element corresponding to the $n_\eta\times n_\eta$-matrix in the $\eta$-component on the right hand side of \eqref{eq:QpH-Wedderburn} that consists entirely of zeros except for a single $1$ in the $j$th entry of the diagonal. 
Note that $\fetaj$ is an indecomposable idempotent in $F[H]$, and $\fetaj F[H] \fetaj \simeq D_\eta$. The primitive central idempotent $\epsilon_F(\eta)$ is the sum of these indecomposable idempotents:
\[\epsilon_F(\eta)=\sum_{j=1}^{n_\eta} \fetaj.\]

\begin{theorem}[{\cite[Theorem~6.1 and Proposition~6.10]{W}}] \label{thm:indecomposability-W}
    Let $\chi\in\Irr(\G)$ and let $\eta\mid\res^\G_H\chi$ be an irreducible constituent thereof. Suppose that the extension $F(\eta)/F_\chi$ is \textbf{totally ramified}. Then $\fetaj$ is an indecomposable idempotent in $\Q^F(\G)$, and $\fetaj \Q^F(\G) \fetaj \simeq D_\chi$.
\end{theorem}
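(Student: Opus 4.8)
The plan is to describe the corner ring $\fetaj\,\Q^F(\G)\,\fetaj$ explicitly as the total ring of quotients of a skew power series ring of the type appearing in \cref{item:main-D}, and to deduce from the structure theory of such rings that it is a skew field.

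First I would reduce to a statement about this corner ring. From $\epsilon_F(\eta)=\sum_j\fetaj$, from $\epsilon_{F,\chi}=\sum_i\epsilon_F(\eta_i)$ (see \eqref{eq:epsilon-chi-etas}), and from the mutual orthogonality of the $\epsilon_F(\eta_i)$, one gets $\fetaj\,\epsilon_{F,\chi}=\fetaj$. Hence $\fetaj$ is a nonzero idempotent of the simple component $\Q^F(\G)\,\epsilon_{F,\chi}\simeq M_{n_\chi}(D_\chi)$, and $\fetaj\,\Q^F(\G)\,\fetaj\simeq M_r(D_\chi)$ where $r$ is the $D_\chi$-rank of $\fetaj$. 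It therefore suffices to prove that $\fetaj\,\Q^F(\G)\,\fetaj$ has no zero divisors: then $r=1$, so $\fetaj$ is indecomposable and the corner ring is isomorphic to $D_\chi$.

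Next I would manufacture a skew variable inside this corner ring. A direct computation with ${}^{\gamma^{\vf}}\eta={}^{\tauf}\eta$ shows that conjugation by $\gamma^{\vf}$ fixes $\epsilon_F(\eta)$ and acts on $F[H]\epsilon_F(\eta)\simeq M_{n_\eta}(D_\eta)$ as a ring automorphism restricting to $\tauf$ on the centre $F(\eta)$. Correcting it by a unit $a$ satisfying $a\,(\gamma^{\vf}\fetaj\gamma^{-\vf})\,a^{-1}=\fetaj$, and using that $\fetaj\,\Q^F(\G)\,\fetaj$ is simple artinian (so the resulting automorphism, which one verifies fixes the centre of the corner ring, is inner), one obtains a unit $u\in\fetaj\,\Q^F(\G)\,\fetaj$ with $u\,d\,u^{-1}=\tauf(d)$ for all $d\in D_\eta=\fetaj F[H]\fetaj$, where $\tauf$ now denotes its canonical extension of order $(F(\eta):F_\chi)$. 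By choosing $a$ and a central rescaling of $u$ suitably — this is exactly where the totally ramified hypothesis will enter; see the last paragraph — I would arrange in addition that $u^{(F(\eta):F_\chi)}$ lies in the centre of $\fetaj\,\Q^F(\G)\,\fetaj$ and that $X\colonequals u-\fetaj$ is topologically nilpotent. Then $Xd=\tauf(d)X+(\tauf-\id)(d)$, and it remains to identify the closure of $\OO_{D_\eta}[u]$ inside $\fetaj\,\Q^F(\G)\,\fetaj$ with the skew power series ring $\OO_{D_\eta}[[X;\tauf,\tauf-\id]]$ (with $\OO_{D_\eta}$ the maximal order of $D_\eta$), and to check that this closure is an order in $\fetaj\,\Q^F(\G)\,\fetaj$, that is, that its total ring of quotients is all of the corner ring.

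Granting this, the conclusion is immediate: by \cite[\S3]{W} the skew power series ring $\OO_{D_\eta}[[X;\tauf,\tauf-\id]]$ is a domain whose total ring of quotients is a skew field, so $\fetaj\,\Q^F(\G)\,\fetaj$ is a skew field, $r=1$, and the corner ring is $D_\chi$. The hard part will be the normalisation carried out in the previous step: arranging that $u-\fetaj$ is genuinely topologically nilpotent and that the completed order occurring has coefficients in the \emph{maximal} order $\OO_{D_\eta}$ — and not in a suborder whose residue ring carries extra idempotents. This is precisely the point at which the hypothesis that $F(\eta)/F_\chi$ be \textbf{totally ramified} is used; an unramified subextension would obstruct this normalisation, and removing the hypothesis is the purpose of the body of the present paper.
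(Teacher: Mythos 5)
Your overall strategy is the reverse of the one used in \cite{W} (which this paper only recalls: the theorem is quoted from [W, Theorem~6.1, Proposition~6.10], and the paper reproduces just the first reduction step and the computational rules of \cref{recall-multiplication}). There, indecomposability is proved first, by writing $x=\sum_k x_k\gamma^k$ along the decomposition \eqref{eq:QFG-dec}, using $\fetaj x_k\fetaj=0$ for $\vf\nmid k$ and the elements $a_{F,i}$ and $\pt$ to turn the existence of an inverse of $\fetaj x\fetaj$ into an explicitly solvable equation; the description of $\DFchi$ as $\Quot\left(\OO_{D_\eta}[[X;\tauf,\tauf-\id]]\right)$ (\cref{DchiW-bigoplus}) is then deduced \emph{from} indecomposability. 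You propose to go the other way: exhibit the skew power series ring inside the corner ring $\fetaj\Q^F(\G)\fetaj$ and conclude from \cite{VenjakobWPT} that the corner ring is a skew field. Your initial reduction (corner ring $\simeq M_r(D_\chi)$, so it suffices to rule out zero divisors) and the Skolem--Noether construction of a unit $u$ with $u d u^{-1}=\tauf(d)$ are fine, and the relation $Xd=\tauf(d)X+(\tauf-\id)(d)$ for $X=u-\fetaj$ is correct.

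The gap is that everything after that is asserted rather than proved, and the deferred steps are exactly the content of the theorem. You do not show how to normalise $u$ so that $u^{(F(\eta):F_\chi)}$ is central and $X=u-\fetaj$ is topologically nilpotent; you do not show that the closure of $\OO_{D_\eta}[u]$ is (isomorphic to, not merely a quotient of) $\OO_{D_\eta}[[X;\tauf,\tauf-\id]]$; and, most importantly, you give no argument that the total ring of quotients of this closure exhausts $\fetaj\Q^F(\G)\fetaj$ --- without this last point you only obtain a skew subfield of $M_r(D_\chi)$, which says nothing about $r$ (a dimension count over $\Q^F(\Gamma_0)$ using \cref{item:f-vf-div} could plausibly close this, but you would have to carry it out). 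Equally seriously, the totally ramified hypothesis is never actually used: you only announce that it ``will enter'' in the normalisation. Since the statement is genuinely false without it --- in the unramified situation of \cref{sec:unramified} the idempotent $\fetaj$ decomposes in $\Q^F(\G)$, which is why this paper replaces it by the orbit sums $\ffFjetai$ --- a proof in which the hypothesis plays no identifiable role cannot be complete. There is also a latent circularity risk: the isomorphism $\DFchi\simeq\Quot\left(\OO_{D_\eta}[[X;\tauf,\tauf-\id]]\right)$ that your plan aims to reprove internally is, in \cite{W}, itself a consequence of the indecomposability you are trying to establish, so you must make sure the intermediate identifications (in particular the centrality of $u^{(F(\eta):F_\chi)}$ and the order property) are obtained independently of it.
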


We recall the first step in the proof of \cref{thm:indecomposability-W}: this is reformulating the claim in terms of linear algebra. 
Let $x\in \Q^F(\G)$ be an element such that $\fetaj x \fetaj \ne0$. \Cref{thm:indecomposability-W} states that there exists a $y\in\Q^F(\G)$ such that $\fetaj x \fetaj \cdot \fetaj y \fetaj =1$.

Recall that $n_0$ is large enough such that $\Gamma_0=\Gamma^{p^{n_0}}$ is central in $\G$. Then there is a decomposition of left $\Q^F(\Gamma_0)$-modules
\begin{equation} \label{eq:QFG-dec}
    \Q^F(\G)=\bigoplus_{i=0}^{p^{n_0}-1} \Q^F(\Gamma_0)[H] \gamma^i,
\end{equation}
Let $\tD_\eta\colonequals D_\eta\otimes_{F(\eta)} \Q^{F(\eta)}(\Gamma_0)$. Then the isomorphism $F[H]\epsilon_F(\eta)\simeq M_{n_\eta}(D_\eta)$ extends to an isomorphism
$\Q^F(\G)\epsilon_F(\eta)\simeq M_{n_\eta}(\tD_\eta)$. The automorphism $\tauf$ extends to $\tD_\eta$ by letting it act trivially on the topological generator $\gamma_0\colonequals\gamma^{p^{n_0}}$ of $\Gamma_0$. This defines an entry-wise action of $\tauf$ on the matrix ring $M_{n_\eta}(\tD_\eta)$. Let $\pt\in\Aut(\Q^F(\G)\epsilon_F(\eta))$ be the pullback of this automorphism under $\Q^F(\G)\epsilon_F(\eta)\simeq M_{n_\eta}(\tD_\eta)$.

Under the decomposition \eqref{eq:QFG-dec}, we may write $x=\sum_{k=0}^{p^{n_0}-1} x_{k} \gamma^{k}$ and $y=\sum_{\ell=0}^{p^{n_0}-1} y_\ell \gamma^\ell$ with $x_{k},y_\ell\in\Q^F(\Gamma_0)[H]$. We now describe the rules of computation in $\fetaj \Q^F(\G) \fetaj$.
\begin{proposition}[{\cite[\S5]{W}}] \label{recall-multiplication}
    Let $F/\QQ_p$ be any finite extension and let $x,y\in \Q^F(\G)$ be arbitrary.
    \begin{propositionlist}
        \item \label{item:f-vf-div} We have $\fetaj x_{k} \fetaj=0$ whenever $\vf\nmid k$.
        \item \label{item:multiplication-rule} There exists a collection of elements $a_{F,i}\in F[H]\epsilon_F(\eta)^\times$ for $i\ge0$ such that $a_{F,i} \pt^{i}\left(a_{F,j}\right)=a_{F,i+j}$ for all $i,j\ge0$, and $\fetaj x \fetaj \cdot \fetaj y \fetaj$ equals
        \[ \sum_{\substack{k,\ell=0 \\ \vf\mid k,\ell}}^{p^{n_0}-1} 
	f_{\eta}^{(j)} x_k  a_{F,k/\vf} f_{\eta}^{(j)} \cdot 
	f_{\eta}^{(j)} \pt^{k/\vf}(y_{\ell}) \pt^{k/\vf}(a_{F,\ell/\vf}) f_{\eta}^{(j)} 
	\cdot a_{F,(k+\ell)/\vf}^{-1} \cdot \gamma^{k+\ell}.\]
        Furthermore, $a_{F,p^{n_0}/v_\chi}$ is central in $\Q^F(\Gamma_0)[H]\epsilon_F(\eta)$.
        \item Let $\gcpf\colonequals a_{F,1}^{-1} \gamma^{\vf}\in\Q^F(\G)$. Then $(\gcpf)^j=a_{F,j}^{-1} \gamma^{j\vf}$ holds for all $j\ge0$. Furthermore, conjugation by $\gcpf$ acts as $\pt$ on $\fetaj \Q^F(\Gamma_0)[H] \fetaj$, that is, for all $x_0\in\Q^F(\Gamma_0)[H]$:
        \[\gcpf \cdot \fetaj x_0 \fetaj (\gcpf)^{-1} = \fetaj \pt(x_0) \fetaj.\]
        \item For any intermediate field extension $F_\chi\subseteq E\subseteq F(\eta)$, we have $\delta_{\tau_E}=\pt^{(E:F_\chi)}$ and $\gamma_{E,\eta}^{\prime\prime}=\left(\gcpf\right)^{(E:F_\chi)}$. \label{item:changing-gcp}
    \end{propositionlist}
\end{proposition}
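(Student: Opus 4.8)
The plan is to transport the whole question into the $\eta$-block $F[H]\epsilon_F(\eta)\simeq M_{n_\eta}(D_\eta)$ by means of the $\Q^F(\Gamma_0)$-module decomposition \eqref{eq:QFG-dec} together with the Skolem--Noether theorem. I would begin with the vanishing statement (i). Since $\Gamma_0$ is central in $\G$, the ring $\Q^F(\Gamma_0)[H]$ equals $\Q^F(\Gamma_0)\otimes_F F[H]$ and thus carries the Wedderburn block decomposition of $F[H]$; the idempotent $\fetaj$ lies in the block attached to the $\sim_F$-class of $\eta$, whereas conjugation by $\gamma^k$ carries that block onto the block of ${}^{\gamma^k}\eta$. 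Writing an element of $\Q^F(\Gamma_0)[H]\gamma^k$ as $x_k\gamma^k$ and commuting $\gamma^k$ to the right past the second $\fetaj$, one sees that $\fetaj x_k\gamma^k\fetaj=\fetaj x_k\,({}^{\gamma^k}\!\fetaj)\,\gamma^k$ must vanish unless $\eta$ and ${}^{\gamma^k}\eta$ lie in the same $\sim_F$-class, which by \eqref{eq:def-v-chi} happens exactly when $\vf\mid k$; this gives (i) and, with \eqref{eq:QFG-dec}, reduces all further computations to the terms with $\vf\mid k$.

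For (iii) and the auxiliary units appearing in (ii), the key point is that conjugation by $\gamma^{\vf}$ and the automorphism $\pt$ are both $F$-algebra automorphisms of the central simple $F(\eta)$-algebra $F[H]\epsilon_F(\eta)$ whose restriction to the centre $F(\eta)$ is $\tauf$ --- the first because ${}^{\gamma^{\vf}}\eta={}^{\tauf}\eta$, the second by the construction of $\pt$. Hence their ``quotient'' is a $K$-algebra automorphism ($K=F(\eta)$) of $M_{n_\eta}(D_\eta)$, so by Skolem--Noether it is inner, and one obtains a unit $a_{F,1}\in\left(F[H]\epsilon_F(\eta)\right)^{\times}$ for which conjugation by $\gcpf\colonequals a_{F,1}^{-1}\gamma^{\vf}$ equals $\pt$ on $F[H]\epsilon_F(\eta)$. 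As $\gamma^{\vf}$ and $a_{F,1}$ centralise $\Q^F(\Gamma_0)$ and $\pt$ acts trivially there, this conjugation in fact agrees with $\pt$ on all of $\Q^F(\Gamma_0)[H]\epsilon_F(\eta)$, in particular on $\fetaj\Q^F(\Gamma_0)[H]\fetaj$. Putting $a_{F,0}\colonequals1$ and $a_{F,i}\colonequals a_{F,1}\pt(a_{F,1})\cdots\pt^{i-1}(a_{F,1})$, the relation $\gamma^{\vf}=a_{F,1}\gcpf$ gives $\gamma^{i\vf}=a_{F,i}\gcpf^i$ by induction, hence $(\gcpf)^i=a_{F,i}^{-1}\gamma^{i\vf}$, while $a_{F,i}\pt^i(a_{F,j})=a_{F,i+j}$ and $a_{F,i}\in\left(F[H]\epsilon_F(\eta)\right)^{\times}$ are immediate from the definition. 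This settles (iii) and the part of (ii) concerning the $a_{F,i}$.

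The multiplication rule is then obtained by a direct computation, which I would carry out as follows. Because $\pt$ acts entrywise by $\tauf$ it fixes the matrix unit corresponding to $\fetaj$, so $\pt(\fetaj)=\fetaj$; consequently $(\gcpf)^m$ commutes with $\fetaj$ and satisfies $(\gcpf)^m z=\pt^m(z)(\gcpf)^m$ for $z\in\Q^F(\Gamma_0)[H]\epsilon_F(\eta)$. Using $\gamma^k=a_{F,k/\vf}(\gcpf)^{k/\vf}$ for $\vf\mid k$, one rewrites $\fetaj x_k\gamma^k\fetaj=\left(\fetaj x_k a_{F,k/\vf}\fetaj\right)(\gcpf)^{k/\vf}$; multiplying two such expressions, commuting $(\gcpf)^{k/\vf}$ to the right through $\fetaj y_\ell a_{F,\ell/\vf}\fetaj$ (which applies $\pt^{k/\vf}$), and finally substituting $(\gcpf)^{(k+\ell)/\vf}=a_{F,(k+\ell)/\vf}^{-1}\gamma^{k+\ell}$, yields precisely the displayed summand; summation over $k,\ell$ with $\vf\mid k,\ell$ completes (ii). For the centrality of $a_{F,p^{n_0}/\vf}$, note $(\gcpf)^{p^{n_0}/\vf}=a_{F,p^{n_0}/\vf}^{-1}\gamma_0$ with $\gamma_0$ central, and conjugation by this element is $\pt^{p^{n_0}/\vf}$, which is trivial because the order of $\pt$ is that of $\tauf$, namely $w_\chi/\vf$ by \cref{item:Feta-Fchi-degree}, and $w_\chi\mid p^{n_0}$ since $\Gamma_0\subseteq\cent(\G)$ stabilises $\eta$; hence $a_{F,p^{n_0}/\vf}$ centralises $\Q^F(\Gamma_0)[H]\epsilon_F(\eta)$ and, belonging to it, is central. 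Finally, (iv) follows from \cref{item:change-tau}: it gives $\tau_E=\tauf^{(E:F_\chi)}$ and $v_\chi^E=\vf(E:F_\chi)$, whence $\delta_{\tau_E}=\pt^{(E:F_\chi)}$, and once the Skolem--Noether choices over the intermediate fields are taken coherently (deriving the $E$-level data from the $F$-level data), also $\gamma^{\prime\prime}_{E,\eta}=(\gcpf)^{(E:F_\chi)}$.

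The main obstacle is less any single step than the accumulated bookkeeping: fixing the conventions (which side $a_{F,1}$ sits on, the order of composition of automorphisms) so that the formula in (ii) comes out verbatim, and --- above all for (iv) --- keeping the Wedderburn isomorphisms, the primitive idempotents $\epsilon_E(\eta)$ and the auxiliary units coherent across the tower $F_\chi\subseteq E\subseteq F(\eta)$, bearing in mind that $a_{F,1}$ is only unique up to $F(\eta)^{\times}$. The one genuinely conceptual input is the Skolem--Noether step, which rests on verifying that conjugation by $\gamma^{\vf}$ really does induce $\tauf$ on the centre $F(\eta)$ of the $\eta$-block.
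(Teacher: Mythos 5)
The paper does not prove this proposition here at all — it is recalled from \cite[\S5]{W} — so the only comparison available is with the construction one expects behind that citation, and your reconstruction matches it: orthogonality of the central idempotents $\epsilon_F(\eta)$ and ${}^{\gamma^k}\epsilon_F(\eta)$ in $\Q^F(\Gamma_0)[H]$ for $\vf\nmid k$ gives (i); Skolem--Noether applied to the central simple $F(\eta)$-algebra $F[H]\epsilon_F(\eta)$, after the (correctly identified) check that conjugation by $\gamma^{\vf}$ and $\pt$ both restrict to $\tauf$ on the centre, produces $a_{F,1}$; and the cocycle definition $a_{F,i}=a_{F,1}\pt(a_{F,1})\cdots\pt^{i-1}(a_{F,1})$, the $\Q^F(\Gamma_0)$-linearity argument, $\pt(\fetaj)=\fetaj$, and $w_\chi\mid p^{n_0}$ yield (ii)--(iii) exactly as you compute. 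The one loose end is (iv): since Skolem--Noether determines $a_{F,1}$ only up to central units, the stated equalities cannot hold for independent choices at each level, so one must actually define the $E$-level data from the $F$-level data (essentially $a_{E,i}\colonequals a_{F,i(E:F_\chi)}\,\epsilon_E(\eta)$, noting that $\epsilon_E(\eta)$ is in general strictly smaller than $\epsilon_F(\eta)$) and verify, using the compatible Wedderburn isomorphisms over the tower and \cref{item:change-tau}, that this choice satisfies the defining relation over $E$; you flag this coherence requirement but leave that short verification implicit.
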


Finally, we record a description of $\DFchi$ in terms of $\gcpf$ under the total ramification assumption.
\begin{proposition}[{\cite[Corollary 6.11]{W}}] \label{DchiW-bigoplus}
    Let $\chi\in\Irr(\G)$ and let $\eta\mid\res^\G_H\chi$ be an irreducible constituent thereof. Suppose that the extension $F(\eta)/F_\chi$ is \textbf{totally ramified}. Then there is an isomorphism of rings
    \[\DFchi \simeq \bigoplus_{\substack{\ell=0\\ \vf\mid \ell}}^{p^{n_0}-1} \tD_\eta \left(\gcpf\right)^{\ell/\vf}\]
    where conjugation by $\gcpf$ acts as $\tauf$ on $\tD_\eta$. Moreover, the statements of \cref{thm:main} hold in this case.
\end{proposition}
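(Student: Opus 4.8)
The plan is to identify $\DFchi$ with the corner ring $\fetaj \Q^F(\G) \fetaj$ and then make that corner ring explicit, using \cref{thm:indecomposability-W} and the multiplication rules recorded in \cref{recall-multiplication}. Since $F(\eta)/F_\chi$ is totally ramified, \cref{thm:indecomposability-W} already gives $\DFchi \simeq \fetaj \Q^F(\G) \fetaj$, so everything reduces to describing this ring. Starting from \eqref{eq:QFG-dec}, two elementary observations drive the computation. First, taking $x_0 = 1$ in the third part of \cref{recall-multiplication} yields $\gcpf \fetaj (\gcpf)^{-1} = \fetaj$, so $\gcpf$ commutes with $\fetaj$. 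Second, by \cref{item:f-vf-div} only the summands $\Q^F(\Gamma_0)[H]\gamma^k$ of \eqref{eq:QFG-dec} with $\vf \mid k$ survive multiplication by $\fetaj$ on both sides. Using $\gamma^{m\vf} = a_{F,m}(\gcpf)^m$ (again the third part of \cref{recall-multiplication}) together with $\fetaj = \fetaj\epsilon_F(\eta)$ and the fact that right multiplication by the unit $a_{F,m}$ of $F[H]\epsilon_F(\eta)$ is bijective on $\Q^F(\Gamma_0)[H]\epsilon_F(\eta)$, one obtains
\[
    \fetaj \Q^F(\G) \fetaj \;=\; \bigoplus_{m=0}^{p^{n_0}/\vf - 1} \bigl(\fetaj \Q^F(\Gamma_0)[H] \fetaj\bigr)(\gcpf)^m,
\]
the sum being direct because its $m$th term lies in the summand $\Q^F(\Gamma_0)[H]\gamma^{m\vf}$ of \eqref{eq:QFG-dec}, with conjugation by $\gcpf$ normalising $\fetaj \Q^F(\Gamma_0)[H] \fetaj$ and acting on it as $\pt$.

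Next I would translate the ground term. Since $\Gamma_0$ is central, the restriction of the isomorphism $\Q^F(\G)\epsilon_F(\eta) \simeq M_{n_\eta}(\tD_\eta)$ to the subring $\Q^F(\Gamma_0)[H]\epsilon_F(\eta) = \Q^F(\Gamma_0) \otimes_F \bigl(F[H]\epsilon_F(\eta)\bigr)$ is already an isomorphism onto $M_{n_\eta}(\tD_\eta)$, under which $\fetaj$ becomes the diagonal matrix unit $E_{jj}$. Hence $\fetaj \Q^F(\Gamma_0)[H] \fetaj \simeq E_{jj}M_{n_\eta}(\tD_\eta)E_{jj} \simeq \tD_\eta$, and $\pt$ goes over to the entrywise action of $\tauf$, i.e.\ to $\tauf$ acting on $\tD_\eta$. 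Substituting into the displayed decomposition and reindexing $\ell = m\vf$ gives exactly $\DFchi \simeq \bigoplus_{\substack{\ell=0\\\vf\mid\ell}}^{p^{n_0}-1} \tD_\eta (\gcpf)^{\ell/\vf}$ with conjugation by $\gcpf$ acting as $\tauf$, which is the asserted isomorphism.

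It then remains to derive the three assertions of \cref{thm:main}. For \cref{item:main-n}: combining \eqref{eq:epsilon-chi-etas} with the decomposition of each $\epsilon_F(\eta_i)$ into $n_\eta$ indecomposable idempotents of $F[H]$ exhibits $\epsilon_{F,\chi}$ as a sum of $n_\eta\vf$ pairwise orthogonal idempotents; these are indecomposable in $\Q^F(\G)$ by \cref{thm:indecomposability-W} (applicable to every $\eta_i$, since $F(\eta_i) = F(\eta)$, so the total ramification hypothesis is shared), and comparing with $\Q^F(\G)\epsilon_{F,\chi} \simeq M_{\nFchi}(\DFchi)$ forces $\nFchi = n_\eta\vf$. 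For \cref{item:main-s}: a dimension count over the respective centres suffices — the displayed decomposition gives $\dim_{\tD_\eta}\DFchi = p^{n_0}/\vf$, while $\tauf$ acts on $\cent(\tD_\eta) = \Q^{F(\eta)}(\Gamma_0)$ with fixed field of index $(F(\eta):F_\chi)$ and, being an automorphism of the central simple $\cent(\tD_\eta)$-algebra $\tD_\eta$, is inner precisely on those powers that act trivially on $\cent(\tD_\eta)$; identifying $\cent(\DFchi)$ on this basis and comparing $[\DFchi:\cent(\DFchi)]$ with $\sFchi^2$ yields $\sFchi = s_\eta(F(\eta):F_\chi)$. For \cref{item:main-D}: the displayed decomposition, with $\tD_\eta = D_\eta \otimes_{F(\eta)} \Q^{F(\eta)}(\Gamma_0)$ and $\gcpf$ acting by $\tauf$, is precisely the shape of the total ring of quotients of the skew power series ring $\OO_{D_\eta}[[X; \tauf, \tauf - \id]]$ analysed in \cite[\S3]{W}; one chooses the variable $X$ so that the $\ZZ_p$-direction is built from $\gamma_0$ and $\gcpf$, using that $(\gcpf)^{p^{n_0}/\vf}$ equals $\gamma_0$ up to a central unit, and then concludes by the structural results of that section.

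The step I expect to be the main obstacle is this last identification in \cref{item:main-D}: producing the correct skew-power-series variable and verifying that $\bigoplus_m \tD_\eta(\gcpf)^m$ really is $\Quot\bigl(\OO_{D_\eta}[[X;\tauf,\tauf-\id]]\bigr)$, rather than some Morita-equivalent or differently twisted ring — this is exactly where the structure theory of \cite[\S3]{W} must be brought to bear. A subsidiary point requiring care throughout is the centrality of $a_{F,p^{n_0}/\vf}$ in $\Q^F(\Gamma_0)[H]\epsilon_F(\eta)$ and its behaviour under $\tauf$, since this governs the relation satisfied by $(\gcpf)^{p^{n_0}/\vf}$ and hence feeds into both the centre computation of \cref{item:main-s} and the choice of variable in \cref{item:main-D}.
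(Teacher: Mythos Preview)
The paper does not actually prove this proposition: it is quoted from \cite[Corollary~6.11]{W} as background, with no argument supplied. Your reconstruction from the tools recalled in \cref{thm:indecomposability-W} and \cref{recall-multiplication} is sound and, as far as one can tell from the paper's forward references, tracks the original argument in \cite{W}: the direct-sum description of $\fetaj\Q^F(\G)\fetaj$ together with the identification with the skew power series quotient is \cite[Proposition~6.9]{W} (invoked later in the proof of \cref{cor:D}), the Schur index of that ring is \cite[Corollary~3.3]{W}, and the composition-series count giving $\nFchi=n_\eta\vf$ is \cite[\S6.2]{W}, reproduced here as \cref{invariants-of-DFchi}.

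The one place your route diverges is \cref{item:main-s}. You propose computing $\cent(\DFchi)$ directly from the decomposition and reading off the index by a dimension count; the paper instead (see the proof of \cref{invariants-of-DFchi}) simply quotes Nickel's identity $\sFchi=w_\chi s_\eta n_\eta/\nFchi$ and substitutes the formula for $\nFchi$. Your approach works too and is more self-contained once the ring is in hand, but it does require the care you flag about the central element $a_{F,p^{n_0}/\vf}$ and the exact order of $\tauf$ on $D_\eta$ (the paper records that $\tauf$ extends to $D_\eta$ with the \emph{same} order as on $F(\eta)$, which is what makes the inner-automorphism step collapse). Either way the answer is the same, and the obstacle you single out for \cref{item:main-D} is indeed the substantive one, handled in \cite{W} rather than here.
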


\section{Indecomposable orthogonal idempotents in \texorpdfstring{$\Q^F(\G)$}{Q\^F(G)}} \label{sec:indecomposables-QF}
We will construct indecomposable idempotents in $\Q^F(\G)$ by reducing to the totally ramified case as follows. If $W$ is the maximal unramified extension of $F_\chi$ inside $F(\eta)$, then as we have seen in \cref{thm:indecomposability-W}, the idempotents $\fWjetai$ are indecomposable in $\Q^W(\G)$. We will show that taking sums over Galois orbits with respect to $W/F$ gives rise to indecomposable idempotents $\ffFjetai$ in $\Q^F(\G)$.

\subsection{Unramified base change}
Let us fix a character $\chi\in\Irr(\G)$, and let $\eta\mid\res^\G_H\chi$ be an irreducible constituent. We write $e$ resp. $f$ for the ramification index resp. inertia degree of $ F(\eta)/ F_\chi$.
The inertia group of $F(\eta)/F_\chi$ is the cyclic group $\langle\tauf^f\rangle$. Its fixed field $W\colonequals F(\eta)^{\langle \tauf^f\rangle}$ is the maximal unramified extension of $ F_\chi$ inside $ F(\eta)$: this is the unique unramified extension of $F_\chi$ of degree $f$. Notice that $W(\eta)= F(\eta)$ and $W_\chi=W$; in particular, the extension $W(\eta)/W_\chi$ is totally ramified, and so \cref{thm:indecomposability-W} applies to $W$.

Recall that we fixed a Wedderburn decomposition isomorphism of $F[H]$ as in \eqref{eq:QpH-Wedderburn}. For any intermediate Galois extension $F\subseteq E\subseteq F(\eta)$, this induces a Wedderburn isomorphism for $E[H]$. In other words, the following diagram is commutative:
\[\begin{tikzcd}
	& {F[H]} & {\displaystyle\bigoplus_{\eta\in\Irr(H)/\sim_F} M_{n_\eta}(D_\eta)} \\
	{E\otimes_F F[H]} & {E[H]} & {\displaystyle\bigoplus_{\eta\in\Irr(H)/\sim_E} M_{n_\eta}(D_\eta)} & {\displaystyle\bigoplus_{\eta\in\Irr(H)/\sim_F} \bigoplus_{\Gal(E/F)} M_{n_\eta}(D_\eta)}
	\arrow["\simeq", from=1-2, to=1-3]
	\arrow[hook', from=1-2, to=2-2]
	\arrow[hook', from=1-3, to=2-3]
	\arrow["\simeq"{description,font=\normalsize}, draw=none, from=2-1, to=2-2]
    \arrow["\simeq"{description,font=\normalsize}, draw=none, from=2-3, to=2-4]
	\arrow["\simeq", from=2-2, to=2-3]
\end{tikzcd}\]
In particular, the same matrix rings occur in all of these Wedderburn decompositions, but with different multiplicities. Recall that we defined the indecomposable idempotents $\fetaj$ in terms of the fixed Wedderburn isomorphism of $F[H]$. Similarly, we have idempotents $f^{(j)}_{E,\eta}$ defined via the lower row of the diagram. Since the diagram is commutative, in the group ring $E[H]$, we may identify the image of $\fetaj$ under the natural embedding $F[H]\hookrightarrow E[H]$ with $f^{(j)}_{E,\eta}$.

Using \eqref{eq:def-epsilon} and \eqref{eq:epsilon-chi-etas}, we have the following decompositions:
\begin{align} 
    \epsilonFchi &= \sum_{i=1}^{\vf}\sum_{\psi\in\Gal(W/F)} \psi\left(\epsilon_W(\eta_i)\right) \label{eq:epsilon-chi-E} \\
    &= \sum_{i=1}^{\vf}\sum_{j=1}^{n_\eta}\sum_{\psi\in\Gal(W/F)} \psi\left(\fWjetai\right). \label{eq:epsilon-chi-E-f}
\end{align}

\begin{lemma} \label{sigma-epsilon-L-j-i-orthogonal}
    The idempotents $\psi(\epsilon_W(\eta_i))$ in \eqref{eq:epsilon-chi-E} orthogonal, that is, for all $\psi,\psi'\in\Gal(W/F)$ and $1\le i,i'\le \vf$,
    \[\psi\left(\epsilon_W(\eta_i)\right)\cdot \psi'\left(\epsilon_W(\eta_{i'})\right)=\delta_{\psi\psi'}\delta_{ii'} \psi\left(\epsilon_W(\eta_i)\right).\]
\end{lemma}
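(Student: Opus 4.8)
The plan is to prove orthogonality by splitting it into two separate claims: orthogonality in the index $i$ (for a fixed Galois automorphism), and orthogonality across distinct Galois automorphisms $\psi \ne \psi'$. For the first claim, recall from \eqref{eq:epsilon-chi-etas} that $\epsilonFchi = \sum_{i=1}^{\vf} \epsilon_F(\eta_i)$ is a sum of \emph{distinct} primitive central idempotents in $F[H]$; hence the $\epsilon_F(\eta_i)$ are pairwise orthogonal in $F[H]$. Applying $-\otimes_F W$ and using that $\epsilon_F(\eta_i) = \sum_{\psi \in \Gal(W/F)} \psi(\epsilon_W(\eta_i))$ in $W[H]$ (from \eqref{eq:epsilon-chi-E}), we can extract the orthogonality of the summands, but a little care is needed: we really want orthogonality of the \emph{individual} $W$-idempotents $\psi(\epsilon_W(\eta_i))$, not just of the $F$-idempotents. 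The cleanest route is to observe that each $\psi(\epsilon_W(\eta_i))$ is itself a primitive central idempotent of $W[H]$ — it is the $\psi$-conjugate of such — and then argue that the full collection $\{\psi(\epsilon_W(\eta_i)) : \psi \in \Gal(W/F),\ 1 \le i \le \vf\}$ consists of \emph{pairwise distinct} primitive central idempotents of $W[H]$; distinct primitive central idempotents in any ring are automatically orthogonal.

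So the real content is the distinctness claim, and I expect that to be the main obstacle. Here is how I would handle it. A primitive central idempotent $\psi(\epsilon_W(\eta_i))$ of $W[H]$ corresponds to the $\Gal(W(\eta_i)/W)$-orbit — equivalently the $\sim_W$-class — of the character $\psi \cdot \eta_i$ (using that $W(\eta_i) = F(\eta) = W(\eta)$ and $W_\chi = W$, as noted just before the lemma). Two such idempotents $\psi(\epsilon_W(\eta_i))$ and $\psi'(\epsilon_W(\eta_{i'}))$ coincide if and only if $\psi\eta_i$ and $\psi'\eta_{i'}$ lie in the same $\Gal(F(\eta)/W)$-orbit, i.e.\ in the same $\Gal(F(\eta)/W)$-orbit inside $\Irr(H)$. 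Now suppose they do coincide. Since $\Gal(F(\eta)/W) = \langle \tauf^f \rangle$ is the inertia subgroup and in particular a subgroup of $\Gal(F(\eta)/F_\chi) = \langle \tauf \rangle$, both $\psi\eta_i$ and $\psi'\eta_{i'}$ then lie in the same $\Gal(F(\eta)/F_\chi)$-orbit; but $\psi,\psi' \in \Gal(W/F)$ act trivially on... — here I need to be slightly more careful, because $\psi$ need not fix $F_\chi$. Instead I would argue directly on characters: from \eqref{eq:res-eta-i}, the $\vf$ characters $\eta_1,\dots,\eta_{\vf}$ have pairwise \emph{disjoint} $\Gal(F(\eta)/F_\chi)$-orbits (their orbit sums are distinct summands of $\res^\G_H\chi$), and applying any $\psi \in \Gal(W/F)$ permutes $\Irr(H)$, carrying the orbit of $\eta_i$ to the orbit of $\psi\eta_i$. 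Distinctness of the $\psi(\epsilon_W(\eta_i))$ thus reduces to the statement that the $\Gal(F(\eta)/W)$-orbits of $\{\psi\eta_i\}$ (as $\psi$ and $i$ vary) are all distinct, which follows from \eqref{eq:res-eta-i} together with the fact that $\epsilonFchi$ is a single primitive central idempotent of $F[H]$ and hence its expansion \eqref{eq:epsilon-chi-E} into $W$-primitive central idempotents has no repetitions.

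To make the last step watertight I would invoke a standard fact about restriction of scalars along $F \subseteq W$: if $\epsilon$ is a primitive central idempotent of $F[H]$ with associated $\sim_F$-class $[\theta]$, then in $W[H]$ one has $\epsilon = \sum \epsilon'$ where $\epsilon'$ ranges over the \emph{distinct} primitive central idempotents of $W[H]$ whose $\sim_W$-classes restrict into $[\theta]$, each occurring exactly once (this is \cite[Lemma~1.1]{NickelConductor}, already cited in the paper, or elementary Galois descent for idempotents). Applying this to $\epsilon = \epsilon_F(\eta_i)$ gives that $\{\psi(\epsilon_W(\eta_i)) : \psi \in \Gal(W/F)\}$ has exactly $|{\Gal(W/F)}|/(\text{stabiliser})$ distinct elements, each with multiplicity one; applying it to $\epsilon = \epsilonFchi$ and comparing with \eqref{eq:epsilon-chi-E-f} then forces all the $\psi(\epsilon_W(\eta_i))$ across all $i$ to be distinct as well, since $\epsilonFchi$ is primitive and the $\epsilon_F(\eta_i)$ are distinct. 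Once distinctness is established, orthogonality $\psi(\epsilon_W(\eta_i)) \cdot \psi'(\epsilon_W(\eta_{i'})) = \delta_{\psi\psi'}\delta_{ii'}\,\psi(\epsilon_W(\eta_i))$ is immediate from the general principle that distinct primitive central idempotents multiply to zero while each is idempotent. I anticipate the bookkeeping around which automorphisms fix $F_\chi$ versus merely lie in $\Gal(W/F)$ to be the fiddly part, but \cite{NickelConductor}'s descent lemma should absorb it.
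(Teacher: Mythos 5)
Your route is sound and genuinely different from the paper's. The paper proves the lemma by direct computation: it expands both factors via \eqref{eq:def-epsilon} into sums of the character idempotents $e\bigl({}^{\hat\psi\rho}\eta_i\bigr)$ over $F(\eta)$, uses that $e(\theta)e(\theta')=0$ for distinct irreducible $\theta,\theta'$, and then reads off from the surviving terms that $\psi=\psi'$, $\rho=\rho'$ and $i=i'$ (the key point being that only the identity of $\Gal(F(\eta)/F)$ fixes $\eta_i$, since the values of $\eta_i$ generate $F(\eta)$, and that the $\eta_i$ lie in distinct Galois orbits by \cref{item:res-G-H-E}). You instead note that each $\psi(\epsilon_W(\eta_i))$ is a primitive central idempotent of $W[H]$, that distinct primitive central idempotents are automatically orthogonal, and reduce everything to a distinctness claim, which you extract from multiplicity-one Galois descent applied to \eqref{eq:epsilon-chi-E}. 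That reduction is legitimate and arguably cleaner; what the paper's hands-on computation buys is that it needs no descent lemma at all.

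Two points in your write-up need repair before the argument is complete. First, $\epsilonFchi$ is primitive central in $\Q^F(\G)$, \emph{not} in $F[H]$: by \eqref{eq:epsilon-chi-etas} it decomposes in $F[H]$ as $\sum_{i}\epsilon_F(\eta_i)$, so the descent lemma cannot be applied to $\epsilonFchi$ itself. Apply it to each $\epsilon_F(\eta_i)$ (these are primitive in $F[H]$), or avoid it entirely: since $\epsilonFchi$ is an idempotent, its image in the split semisimple centre $\cent(W[H])$ has all coordinates $0$ or $1$, so no primitive central idempotent of $W[H]$ can occur twice in \eqref{eq:epsilon-chi-E}; this also justifies your parenthetical claim that the $\epsilon_F(\eta_i)$ are pairwise distinct. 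Second, the case $\psi\ne\psi'$, $i=i'$ — which is exactly the $\delta_{\psi\psi'}$ in the statement — is not actually closed as written: saying the orbit has ``$|\Gal(W/F)|/(\text{stabiliser})$ distinct elements'' leaves a nontrivial stabiliser open, and if the stabiliser were nontrivial the lemma would be false. You must show it is trivial. This does follow from your ingredients in one line (the $(W{:}F)$-term identity $\epsilon_F(\eta_i)=\sum_{\psi\in\Gal(W/F)}\psi(\epsilon_W(\eta_i))$ is incompatible with a multiplicity-free decomposition unless all $(W{:}F)$ terms are distinct), or directly as in the paper: if ${}^{\hat\psi}\eta_i$ lies in the $\Gal(F(\eta)/W)$-orbit of $\eta_i$, then some $\rho^{-1}\hat\psi$ fixes $\eta_i$, hence fixes $F(\eta_i)=F(\eta)$ pointwise, forcing $\psi=\id$. (Also, the comparison you invoke at the end should be with \eqref{eq:epsilon-chi-E}, not \eqref{eq:epsilon-chi-E-f}, which concerns the finer $\ff$-idempotents of the next corollary.) With these two fixes your proof is correct.
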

\begin{proof}
    Applying \eqref{eq:def-epsilon}, the product on the left hand side becomes
    \begin{align}
        &\phantom=\psi\left(\sum_{\rho\in\Gal(F(\eta)/W)} e\left({}^\rho \eta_i\right)\right) \cdot \psi'\left(\sum_{\rho'\in\Gal(F(\eta)/W)} e\left({}^{\rho'} \eta_{i'}\right)\right) = \notag \\
        &=\sum_{\rho,\rho'\in\Gal(F(\eta)/W)} \hat\psi \rho \left(e\left(\eta_i\right)\right) \cdot \hat\psi' \rho' \left(e\left(\eta_{i'}\right)\right), \label{eq:rho-double-sum}
    \end{align}
    where $\hat\psi$ resp. $\hat\psi'$ are arbitrary lifts of $\psi$ resp. $\psi'$ under the natural map $\Gal(F(\eta)/F)\twoheadrightarrow\Gal(W/F)$. We assume $\hat\psi=\hat\psi'$ if $\psi=\psi'$.

    If $i\ne i'$, then $\eta_i$ and $\eta_{i'}$ are in separate $\Gal(F(\eta)/W)$-orbits (see \cref{item:res-G-H-E}), hence all terms in \eqref{eq:rho-double-sum} are zero.
    
    Suppose that $i=i'$. If $\psi=\psi'$, then the right hand side is clearly $\psi(\epsilon_W(\eta_i))$. Conversely, suppose that the double sum \eqref{eq:rho-double-sum} is nonzero.
    Since $i=i'$, the idempotents in \eqref{eq:rho-double-sum} are orthogonal, so their product is nonzero iff the respective characters agree, which in turn means $\hat\psi\rho=\hat\psi'\rho'$. Then the restrictions to $W$ must agree; since the automorphisms $\rho$ and $\rho'$ fix $W$, this means that $\psi=\psi'$, and thus $\hat\psi=\hat\psi'$. It follows that in each nonzero term, we have $\rho=\rho'$, and the sum is $\psi(\epsilon_W(\eta_i))$.
\end{proof}

\begin{corollary} \label{sigma-f-L-j-i-orthogonal}
    The idempotents $\psi\left(\fWjetai\right)$ in \eqref{eq:epsilon-chi-E-f} are orthogonal, that is, for all $\psi,\psi'\in\Gal(W/F)$, $1\le i,i'\le \vf$, and $1\le j,j'\le n_\eta$,
    \[\psi\left(\fWjetai\right)\cdot \psi'\left(f^{(j')}_{W,\eta_{i'}}\right)=\delta_{\psi\psi'}\delta_{ii'}\delta_{jj'} \psi\left(\fWjetai\right).\]
\end{corollary}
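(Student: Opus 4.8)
The plan is to deduce this from \cref{sigma-epsilon-L-j-i-orthogonal} together with two elementary observations: that each $\psi\big(\fWjetai\big)$ is absorbed on both sides by the corresponding central idempotent $\psi(\epsilon_W(\eta_i))$, and that a Galois automorphism $\psi\in\Gal(W/F)$, viewed as an $F$-algebra automorphism of $W[H]$ acting on coefficients only, transports every multiplicative relation among idempotents.

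First I would record that inside $W[H]$, for a fixed $i$, the elements $f^{(1)}_{W,\eta_i},\ldots,f^{(n_\eta)}_{W,\eta_i}$ are the standard diagonal matrix units of the $\eta_i$-block $W[H]\epsilon_W(\eta_i)\simeq M_{n_\eta}(D_{\eta_i})$, so that $f^{(j)}_{W,\eta_i}f^{(j')}_{W,\eta_i}=\delta_{jj'}f^{(j)}_{W,\eta_i}$ and $f^{(j)}_{W,\eta_i}=f^{(j)}_{W,\eta_i}\epsilon_W(\eta_i)=\epsilon_W(\eta_i)f^{(j)}_{W,\eta_i}$. Applying the ring automorphism $\psi$ then yields $\psi\big(\fWjetai\big)\psi\big(f^{(j')}_{W,\eta_i}\big)=\delta_{jj'}\psi\big(\fWjetai\big)$ as well as $\psi\big(\fWjetai\big)=\psi\big(\fWjetai\big)\psi(\epsilon_W(\eta_i))=\psi(\epsilon_W(\eta_i))\psi\big(\fWjetai\big)$.

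For the general product I would then write
\[\psi\big(\fWjetai\big)\cdot\psi'\big(f^{(j')}_{W,\eta_{i'}}\big)=\psi\big(\fWjetai\big)\,\psi(\epsilon_W(\eta_i))\cdot\psi'(\epsilon_W(\eta_{i'}))\,\psi'\big(f^{(j')}_{W,\eta_{i'}}\big).\]
By \cref{sigma-epsilon-L-j-i-orthogonal} the middle factor $\psi(\epsilon_W(\eta_i))\,\psi'(\epsilon_W(\eta_{i'}))$ vanishes unless $\psi=\psi'$ and $i=i'$, which disposes of the case $(\psi,i)\ne(\psi',i')$. When $\psi=\psi'$ and $i=i'$, the displayed expression collapses to $\psi\big(\fWjetai\big)\psi\big(f^{(j')}_{W,\eta_i}\big)=\delta_{jj'}\psi\big(\fWjetai\big)$ by the preceding paragraph, which completes the argument.

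There is essentially no obstacle: the mathematical content is entirely contained in \cref{sigma-epsilon-L-j-i-orthogonal}, and the remainder is bookkeeping. The only point I would take care to state explicitly is that each $\psi\in\Gal(W/F)$ acts on $W[H]$ through its action on coefficients (trivially on $H$), hence is an $F$-algebra automorphism, so that idempotency, orthogonality, and the absorption relations $f=f\epsilon=\epsilon f$ are genuinely preserved under $\psi$.
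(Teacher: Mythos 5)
Your proposal is correct and follows essentially the same route as the paper: the case $(\psi,i)\ne(\psi',i')$ is handled by absorbing each $\psi\big(\fWjetai\big)$ into the central idempotent $\psi(\epsilon_W(\eta_i))$ and invoking \cref{sigma-epsilon-L-j-i-orthogonal}, while the case $\psi=\psi'$, $i=i'$ reduces to the matrix-unit relations defining $\fWjetai$ in $M_{n_\eta}(D_{\eta_i})$, transported by the coefficient-wise automorphism $\psi$. Your write-up merely makes explicit the absorption identities and the fact that $\psi$ is an $F$-algebra automorphism, which the paper leaves implicit.
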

\begin{proof}
    Since the idempotents $\psi\left(\fWjetai\right)$ are summands of the idempotents $\psi\left(\epsilon_W(\eta_i)\right)$, and the latter are orthogonal by \cref{sigma-epsilon-L-j-i-orthogonal}, the left hand side is zero whenever $\psi\ne\psi'$ or $i\ne i'$. If $\psi=\psi'$ and $i=i'$, then the statement follows from the definition of  $\fWjetai$ by considering the corresponding matrices in $M_{n_i}(D_{\eta_i})$.
\end{proof}

For $1\le i\le \vf$ and $1\le j\le n_\eta$ fixed, summing these elements over $\Gal(W/F)$ defines an element
\begin{equation} \label{eq:ff-E-def}
    \ffFjetai\colonequals\sum_{\psi\in\Gal(W/F)} \psi\left(\fWjetai\right) \in F[H].
\end{equation}
It follows directly from \cref{sigma-f-L-j-i-orthogonal} that these are orthogonal idempotents in $\Q^F(\G)$. Note that $\ffFjetai$ is a summand of $\epsilonFchi$ by \eqref{eq:epsilon-chi-E-f}.

\begin{proposition} \label{prop:ff-E-j-eta-i}
    For $1\le i\le \vf$, $1\le j\le n_\eta$, the elements $\ffFjetai$ are indecomposable orthogonal idempotents in $\Q^F(\G)$.
    Equivalently, $\ffFjetai \Q^F(\G) \ffFjetai$ is a skew field.
\end{proposition}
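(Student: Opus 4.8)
The plan is to reduce the indecomposability of $\ffFjetai$ over $F$ to the already-established indecomposability of $\fWjetai$ over $W$, by exploiting the Galois descent machinery of \cref{sigma-f-L-j-i-orthogonal}. Write $e=\ffFjetai$ and, for $\psi\in\Gal(W/F)$, $e_\psi\colonequals\psi(\fWjetai)$, so that $e=\sum_\psi e_\psi$ is a sum of $f=(W:F_\chi)=\#\Gal(W/F)$ pairwise orthogonal idempotents (\cref{sigma-f-L-j-i-orthogonal}). By \cref{thm:indecomposability-W} applied to $W$ — which is legitimate since $W(\eta)/W_\chi=F(\eta)/W$ is totally ramified — each $e_\psi \Q^W(\G) e_\psi$ is a skew field, isomorphic to $\DWchi$. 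Since the $e_\psi$ are Galois conjugates of one another, the corners $e_\psi\Q^W(\G)e_\psi$ are all isomorphic; more importantly, $e\,\Q^W(\G)\,e = \bigoplus_{\psi,\psi'} e_\psi \Q^W(\G) e_{\psi'}$ is a ring in which the diagonal corners are skew fields. The key structural observation is that $e\,\Q^W(\G)\,e\simeq M_f(\DWchi)$: one shows the off-diagonal pieces $e_\psi\Q^W(\G)e_{\psi'}$ are nonzero (indeed rank-one $\DWchi$-bimodules) by producing conjugating units — the element of \cref{item:multiplication-rule}, or a group element realizing the relevant Galois twist — that intertwine $e_\psi$ and $e_{\psi'}$. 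Thus over $W$ the idempotent $e$ is \emph{not} indecomposable; it decomposes into $f$ pieces.

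Now descend. The point is that $\Q^F(\G)\,e$ and $\Q^W(\G)\,e$ are related by $\Q^W(\G)\,e \simeq W\otimes_F \Q^F(\G)\,e$ (unramified base change commutes with forming the total ring of quotients here, since $W/F$ is unramified and $\Q^F(\G)$ is a separable algebra over $\Q^F(\Gamma_0)$; this is exactly the kind of base change already used in \cref{sec:introduction}). Under this identification, $e\Q^F(\G)e$ is an $F$-form of $e\Q^W(\G)e\simeq M_f(\DWchi)$, and the $\Gal(W/F)$-action permutes the $e_\psi$ transitively and cyclically (via $\tauf$, whose image generates $\Gal(W/F_\chi)=\Gal(W/F)$-part). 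Concretely, $e\Q^F(\G)e$ is the fixed ring $(e\Q^W(\G)e)^{\Gal(W/F)}$ for a semilinear action that cyclically permutes the matrix idempotents $e_\psi$. A fixed-ring computation — the "untwisting" of a cyclically-permuted matrix algebra, equivalently recognizing $e\Q^F(\G)e$ as a corner of a crossed product of $\DWchi$ with $\Gal(W/F)$ — identifies $e\Q^F(\G)e$ with a single corner, namely with $e_\psi\big(\Q^W(\G)\rtimes\Gal(W/F)\big)e_\psi$ for any fixed $\psi$. Because the $\Gal(W/F)$-action on the center of $\DWchi$ is faithful with fixed field matching $\cent(\DFchi)$, this corner is a skew field. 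Hence $e\Q^F(\G)e=\ffFjetai\,\Q^F(\G)\,\ffFjetai$ is a skew field, which is the assertion; the orthogonality of the $\ffFjetai$ for varying $(i,j)$ has already been recorded after \eqref{eq:ff-E-def}.

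I would carry out the steps in this order: (1) record $\Q^W(\G)e\simeq W\otimes_F\Q^F(\G)e$ and the resulting semilinear $\Gal(W/F)$-action with $\big(\Q^W(\G)e\big)^{\Gal(W/F)}=\Q^F(\G)e$; (2) use \cref{thm:indecomposability-W} over $W$ to see each $e_\psi\Q^W(\G)e_\psi\simeq\DWchi$; (3) produce conjugating units intertwining the $e_\psi$ — here the units $a_{F,k/\vf}$ of \cref{recall-multiplication} and the element $\gcpf$ are the natural candidates, together with a choice of group element inducing the unramified twist — to get $e\Q^W(\G)e\simeq M_f(\DWchi)$ with $\Gal(W/F)$ cyclically permuting the diagonal idempotents; (4) perform the fixed-ring/crossed-product identification to conclude $e\Q^F(\G)e$ is a skew field.

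The main obstacle is step (3)–(4): one must exhibit the intertwiners between the conjugate idempotents $e_\psi$ explicitly enough to pin down the $\Gal(W/F)$-action on $M_f(\DWchi)$ as the \emph{standard} cyclic permutation of matrix units (not some twisted variant), since only then does the fixed-ring computation collapse to a single skew-field corner rather than to a larger matrix algebra. Equivalently, the crux is checking that the cocycle/twist introduced by the unramified Galois action is cohomologically trivial — consistent with the fact that $\DFchi$ has Schur index $s_\eta\,(F(\eta):F_\chi)=s_\eta\,ef$ growing by the full factor $f$ upon restricting scalars from $W$ to $F$ (cf. \cref{item:main-s}), so that no matrix factor survives. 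The bookkeeping of how $\tauf$, $v_\chi^W$, and the transition formulas of \cref{item:change-tau} and \cref{item:changing-gcp} interact will be the delicate part.
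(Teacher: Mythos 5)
Your starting point---reduce to the totally ramified case over $W$ via \cref{thm:indecomposability-W} together with the orthogonality relations of \cref{sigma-f-L-j-i-orthogonal}---is the same as the paper's, but the execution has two genuine problems. First, the structural claim on which your descent rests is false as stated. The idempotent $e=\ffFjetai=\sum_{\psi\in\Gal(W/F)}\psi\bigl(\fWjetai\bigr)$ is a sum of $(W:F)=f\cdot(F_\chi:F)$ conjugates, not $f=(W:F_\chi)$ of them, and it is not true that $e\,\Q^W(\G)\,e\simeq M_f(\DWchi)$: two conjugates $\psi(\fWjetai)$, $\psi'(\fWjetai)$ whose restrictions to $F_\chi$ differ lie under \emph{different} primitive central idempotents of $\Q^W(\G)$, because ${}^{\hat\psi}\eta_i$ is then a constituent of ${}^{\psi|_{F_\chi}}\!\bigl(\res^\G_H\chi\bigr)\neq\res^\G_H\chi$; hence those off-diagonal corners are zero, and the correct shape is a direct sum over $\Gal(F_\chi/F)$ of $f\times f$ matrix rings over (conjugate) skew fields, not a single matrix ring (compare the decomposition $\ffFjeta\Q^{F_\chi}(\G)\ffFjeta=\bigoplus_{\chi'\sim_F\chi}D_{F_{\chi'},\chi'}$ used in \cref{DFchichi}). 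This is repairable by descending first along the block permutation and then treating one block, but as written your Galois action, your group, and your algebra are the wrong ones whenever $F_\chi\neq F$.

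Second, and decisively, the step you yourself flag as the obstacle---that the fixed ring of the semilinear $\Gal(W/\,\cdot\,)$-action on $M_f(\DWchi)$, equivalently the corner of the crossed product, is a skew field---is precisely the content of the proposition, and you do not prove it. Fixed rings of such actions are in general \emph{not} division rings: for the standard cyclic permutation twist one obtains a cyclic algebra with trivial cocycle, i.e.\ a split matrix algebra. So everything hinges on the cocycle having full order, and your stated criterion (``the cocycle is cohomologically trivial, so no matrix factor survives'') is backwards; moreover, the supporting appeal to the Schur index formula of \cref{item:main-s} would be circular, since in the paper that formula is deduced from this proposition via \cref{invariants-of-DFchi}. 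The paper's own proof avoids all of this structure theory: for $x$ with $\ffFjetai x\ffFjetai\neq0$ it expands the left-inverse equation into $\Gal(W/F)$-components using the orthogonality of the $\psi(\fWjetai)$ and the multiplication rules of \cref{recall-multiplication}, solves the $\psi=\id$ component over $W$ by \cref{thm:indecomposability-W}, and obtains the remaining components by applying $\psi$ to that solution; the sum of these is Galois-invariant, hence gives a left inverse in $\Q^F(\G)$. The crossed-product picture you are aiming at is established only later, in \cref{cor:D}, by an explicit homomorphism $\Delta$ together with a Schur-index count that is legitimate at that point.
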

\begin{proof}
The proof begins in the same way as in the totally ramified case recalled in \cref{sec:indecomposables}: let $x\in\Q^F(\G)$ be such that $\ffFjetai x\ffFjetai\ne0$. We will show that there exists $y\in\Q^F(\G)$ such that $\ffFjetai y \ffFjetai$ is a left inverse of $\ffFjetai x \ffFjetai$, that is,
\begin{equation}\label{eq:ff-solubility}
	\ffFjetai y \ffFjetai \cdot \ffFjetai x \ffFjetai  = \ffFjetai \gamma^0 \ffFjetai  .
\end{equation}
Let us expand the expression on the left hand side by using the definition of the orthogonal idempotents $\ffFjetai$ and \cref{recall-multiplication}.
\begin{align*}
& \ffFjetai y \ffFjetai \cdot \ffFjetai x \ffFjetai 
= \sum_{\psi\in\Gal(W/F)}\sum_{\substack{k,\ell=0 \\ \vf f\mid k,\ell}}^{p^{n_0}-1} \psi\left(\fWjetai\right) y_k \cdot \psi\left(a_{W,\frac{k}{\vf f}}\right) \psi\left(\fWjetai\right) \cdot \\
&\cdot \psi\left(a_{W,\frac{k}{\vf f}}^{-1}\right) a_{W,\frac{k}{\vf f}} \ptq^{\frac{k}{\vf f}}(x_{\ell}) a_{W,\frac{k}{\vf f}}^{-1}\cdot \psi\left(a_{W,\frac{k+\ell}{\vf f}}\right) \psi\left(\fWjetai\right) \psi\left(a_{W,\frac{k+\ell}{\vf f}}^{-1}\right)\cdot \gamma^{k+\ell}
\end{align*}
The terms on the right hand side are in $\Q^W(\G)$.

{We solve the equation $\psi$-componentwise. First} consider the component of $\psi=\id$. Since $W(\eta)/W_\chi$ is totally ramified, the equation
\begin{align*}
\sum_{\substack{k,\ell=0 \\ \vf f\mid k,\ell}}^{p^{n_0}-1} \fWjetai y_k \cdot a_{W,\frac{k}{\vf f}} \fWjetai \left(a_{W,\frac{k}{\vf f}}\right)^{-1} \cdot \\ \cdot a_{W,\frac{k}{\vf f}} \ptq^{\frac{k}{\vf f}}(x_{\ell}) \left(a_{W,\frac{k}{\vf f}}\right)^{-1}\cdot a_{W,\frac{k+\ell}{\vf f}} \fWjetai \left(a_{W,\frac{k+\ell}{\vf f}}\right)^{-1}\cdot \gamma^{k+\ell} &= \fWjetai \gamma^0 \fWjetai
\end{align*}
admits a solution $y^{\id}=\sum_{k} y^{\id}_{k} \gamma^k \in \Q^W(\G)$ with $y^{\id}_k\in\Q^W(\Gamma_0)[H]$ by \cref{thm:indecomposability-W}. Set $y^\psi_k\colonequals \psi(y^{\id}_k)\in \Q^W(\Gamma_0)[H]$ and $y^\psi\colonequals \sum_{k} y^\psi_k \gamma^k \in \Q^W(\G)$. Then $y^\psi$ is a solution of the $\psi$-part of \eqref{eq:ff-solubility}. Let $y\colonequals \sum_{\psi} y^\psi \in \Q^ F(\G)$: then this is a solution of \eqref{eq:ff-solubility}.
\end{proof}

\begin{remark}
    Note that \cref{prop:ff-E-j-eta-i} is not quite a direct generalisation of \cref{thm:indecomposability-W}. Indeed, if $F(\eta)/F_\chi$ is totally ramified, then $W=F_\chi$, and \cref{thm:indecomposability-W} states that $f^{(j)}_{F,\eta_i}$ is indecomposable in $\Q^F(\G)$, whereas \cref{prop:ff-E-j-eta-i} states that 
    \[\ffFjetai=\sum_{\psi\in\Gal(F_\chi/F)} \psi\left(f^{(j)}_{F_\chi,\eta_i}\right)\]
    is indecomposable in $\Q^F(\G)$. The elements $f^{(j)}_{F,\eta_i}$ and $\ffFjetai$ need not be equal.
\end{remark}

\subsection{Numerical invariants in the general case}
\begin{corollary} \label{invariants-of-DFchi}
	We have $\nFchi = n_\eta \vf$ and $\sFchi = s_\eta w_\chi/\vf = s_\eta (F(\eta):F_\chi)$, that is, the first two assertions in \cref{thm:main} hold.
\end{corollary}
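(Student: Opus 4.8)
The plan for the first assertion is to count idempotents. Combining \eqref{eq:epsilon-chi-E-f} with the definition \eqref{eq:ff-E-def}, the primitive central idempotent of $\Q^F(\G)$ attached to $\chi$ decomposes as $\epsilonFchi=\sum_{i=1}^{\vf}\sum_{j=1}^{n_\eta}\ffFjetai$, a sum of $n_\eta\vf$ pairwise orthogonal idempotents (by \cref{sigma-f-L-j-i-orthogonal}) that are indecomposable in $\Q^F(\G)$ by \cref{prop:ff-E-j-eta-i}. Carrying this decomposition across the Wedderburn isomorphism $\Q^F(\G)\epsilonFchi\simeq M_{\nFchi}(\DFchi)$ exhibits the identity of $M_{\nFchi}(\DFchi)$ as a sum of orthogonal primitive idempotents; since any such decomposition of the identity in a matrix ring over a skew field has exactly $\nFchi$ summands (compare composition lengths of the regular left module), one gets $\nFchi=n_\eta\vf$. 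The same computation shows that each $\ffFjetai$ is a primitive idempotent of the component, so that $\ffFjetai\,\Q^F(\G)\,\ffFjetai\simeq\DFchi$, and this remains valid with any finite extension of $\QQ_p$ in place of $F$.

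For the second assertion the plan is to pass to the maximal unramified subextension $W=F(\eta)^{\langle\tauf^f\rangle}$ of $F(\eta)/F_\chi$, over which $W(\eta)/W_\chi$ is totally ramified, and to compare $\DFchi$ with $\DWchi$ by base change. Over $W$, \cref{DchiW-bigoplus} applies and gives the Schur index of $\DWchi$ as $s_\eta(W(\eta):W_\chi)=s_\eta(F(\eta):W)$ (with $s_\eta$ unchanged, since $W(\eta)=F(\eta)$, so the $\eta$-components of $F[H]$ and of $W[H]$ are Wedderburn-equivalent). One checks that $\Q^W(\G)\simeq W\otimes_F\Q^F(\G)$: because $\OO_W$ is finite free over $\OO_F$ one has $\Lambda^{\OO_W}(\G)=\OO_W\otimes_{\OO_F}\Lambda^{\OO_F}(\G)$, and by semisimplicity one only needs to invert the nonzero elements of $\OO_F[[\Gamma_0]]$ to reach the total ring of quotients on either side. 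Consequently, regarding $\ffFjetai\in F[H]\subseteq\Q^W(\G)$, we may identify $\ffFjetai\,\Q^W(\G)\,\ffFjetai$ with $W\otimes_F\DFchi$.

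Next I would expand $\ffFjetai=\sum_{\psi\in\Gal(W/F)}\psi(\fWjetai)$ and examine the summands. Each $\psi(\fWjetai)$ is an indecomposable idempotent of $\Q^W(\G)$ — the automorphism $\psi$ applied to the indecomposable idempotent of \cref{thm:indecomposability-W} — and these are pairwise orthogonal by \cref{sigma-f-L-j-i-orthogonal}. Moreover $\psi\in\Gal(W/F)$ carries the primitive central idempotent $\epsilon_{W,\chi}$ to the one attached to the character obtained by applying $\psi$ to the values of $\res^\G_H\chi$; as these values lie in $F_\chi$, every $\psi\in\Gal(W/F_\chi)$ fixes $\epsilon_{W,\chi}$. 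Since $\fWjetai$ is a summand of $\epsilon_W(\eta_i)$, which is a summand of $\epsilon_{W,\chi}$ (because $\eta_i\mid\res^\G_H\chi$), the $f=(W:F_\chi)$ idempotents $\psi(\fWjetai)$ with $\psi\in\Gal(W/F_\chi)$ are pairwise orthogonal primitive idempotents of the single component $\Q^W(\G)\epsilon_{W,\chi}\simeq M_{n_{W,\chi}}(\DWchi)$; hence their sum cuts out a corner ring $\simeq M_f(\DWchi)$, which is exactly the $\epsilon_{W,\chi}$-component of $W\otimes_F\DFchi=\ffFjetai\,\Q^W(\G)\,\ffFjetai$.

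Finally, on the algebra side, set $Z=\cent(\DFchi)$, so that $\DFchi$ is central simple over $Z$ of dimension $\sFchi^2$. As $W\otimes_F Z$ is a finite product of fields, $W\otimes_F\DFchi=(W\otimes_F Z)\otimes_Z\DFchi$ splits as the product of the central simple algebras $K\otimes_Z\DFchi$ over those field factors $K$; this being the Wedderburn decomposition of $W\otimes_F\DFchi$, comparison with the previous paragraph yields $K\otimes_Z\DFchi\simeq M_f(\DWchi)$ for the factor $K$ matching $\epsilon_{W,\chi}$. Taking centres gives $K\simeq\cent(\DWchi)$, and comparing dimensions over $K$ gives $\sFchi^2=\dim_Z\DFchi=\dim_K(K\otimes_Z\DFchi)=f^2\dim_{\cent(\DWchi)}\DWchi=f^2 s_\eta^2(F(\eta):W)^2$, whence $\sFchi=s_\eta(W:F_\chi)(F(\eta):W)=s_\eta(F(\eta):F_\chi)$, which by \cref{item:Feta-Fchi-degree} equals $s_\eta w_\chi/\vf$. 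The step I expect to be most delicate is exactly this base-change bookkeeping: justifying $\Q^W(\G)\simeq W\otimes_F\Q^F(\G)$, checking that forming centres commutes with it (the Azumaya property of $W\otimes_F\DFchi$ over $W\otimes_F Z$), and tracking precisely which Wedderburn components of $\Q^W(\G)$ absorb the translates $\psi(\fWjetai)$.
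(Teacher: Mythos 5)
Your proof is correct, but for the Schur index it takes a genuinely different route from the paper. For $\nFchi=n_\eta\vf$ you argue exactly as the paper does: decompose $\epsilonFchi$ into the $n_\eta\vf$ orthogonal idempotents $\ffFjetai$, invoke \cref{prop:ff-E-j-eta-i} for their indecomposability, and compare composition lengths of $\Q^F(\G)\epsilonFchi$. For $\sFchi$, however, the paper simply quotes Nickel's formula $\sFchi=w_\chi s_\eta n_\eta/\nFchi$ (from the proof of Corollary~1.13 of \cite{NickelConductor}) and substitutes the value of $\nFchi$ together with \cref{item:Feta-Fchi-degree}; you instead base-change to the maximal unramified subextension $W$, identify $\ffFjetai\,\Q^W(\G)\,\ffFjetai$ with $W\otimes_F\DFchi$, observe that its $\epsilon_{W,\chi}$-component is the corner cut out by the $f$ orthogonal primitive idempotents $\psi(\fWjetai)$, $\psi\in\Gal(W/F_\chi)$ (primitivity being preserved by the coefficientwise Galois automorphisms, and $\epsilon_{W,\chi}=e_\chi$ being fixed by them since its coefficients lie in $F_\chi$), hence isomorphic to $M_f(\DWchi)$, and then run a central simple algebra dimension count using \cref{DchiW-bigoplus} over $W$. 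This is sound: the base-change facts you flag do hold ($\Lambda^{\OO_W}(\G)\simeq\OO_W\otimes_{\OO_F}\Lambda^{\OO_F}(\G)$ with $\OO_W$ finite free over $\OO_F$, and the total ring of quotients is reached by inverting the nonzero central elements of $\Lambda^{\OO_F}(\Gamma_0)$, so $\Q^W(\G)\simeq W\otimes_F\Q^F(\G)$ and $W\otimes_F\DFchi$ is Azumaya over the product of fields $W\otimes_F\cent(\DFchi)$), and your matrix-size bookkeeping is consistent ($n_{W,\chi}=n_\eta\vf f\ge f$). What the paper's route buys is brevity — a one-line citation with no base change to justify; what yours buys is self-containedness within the framework of \cite{W} and of this paper, avoiding the external input from \cite{NickelConductor}. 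It is also worth noting that your comparison of $\DFchi$ with $\DWchi$ over the unramified extension anticipates, in cruder numerical form, exactly the mechanism the paper deploys later in \cref{cor:D}, where $\DFchi\simeq\bigoplus_{d=0}^{f-1}\DWchi(\gcpf)^d$ is proved by a Schur-index comparison of the same flavour.
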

\begin{proof}
	This is the same argument as in \cite[\S6.2]{W}.
    {Combining \eqref{eq:epsilon-chi-E-f} with \eqref{eq:ff-E-def}, we have
    \begin{equation} \label{eq:epsilon-ff}
        \epsilonFchi=\sum_{i=1}^{\vf} \sum_{j=1}^{n_\eta} \ffFjetai.
    \end{equation}
    \Cref{prop:ff-E-j-eta-i} implies that this gives rise to a composition series for $\Q^F(\G)\epsilonFchi$ of length $\vf n_\eta$. 
    
    On the other hand, for $1\le j\le \nFchi$, let $f_{F,\chi}^{(j)}$ be the preimage of the matrix with a single nonzero entry $1$ in the $j$th position in the diagonal, under the isomorphism $\Q^F(\G)\epsilon_{F,\chi}\simeq M_{n_\chi}(D_\chi)$. The idempotent $\epsilonFchi$ also decomposes as the sum of these indecomposable idempotents $f_{F,\chi}^{(j)}$, which yields a composition series of length $\nFchi$. 
    
    All composition series have the same length, and the first assertion follows. Nickel showed that $\sFchi=w_\chi s_\eta n_\eta/\nFchi$ \cite[proof of Corollary~1.13]{NickelConductor}. Substituting the formula for $\nFchi$ together with \cref{item:Feta-Fchi-degree} implies the second assertion.}
\end{proof}

\section{Describing the skew fields} \label{sec:description}
We now proceed to prove \cref{item:main-D}. Let $\chi\in\Irr(\G)$ and let $\eta\mid\res^G_H\chi$ be an irreducible constituent of the restriction. Using indecomposability of the idempotents $\ffFjeta$ established above, we will use the description of $\DWchi$ in the totally ramified case to obtain a description of $\DFchi$.
As we will see in \cref{DFchichi}, the actually hard step is passing from $W$ to $F_\chi$, whereas passing from $F_\chi$ to $F$ is more or less automatic.

\begin{corollary} \label{cor:Dchi-ff}
    For every $\chi\in\Irr(\G)$ and $1\le j\le n_\eta$, we have $\DFchi=\ffFjeta \Q^F(\G) \ffFjeta$.
\end{corollary}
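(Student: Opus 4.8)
The plan is to deduce \cref{cor:Dchi-ff} from \cref{prop:ff-E-j-eta-i} together with the elementary fact that the corner ring of a simple artinian ring at an indecomposable idempotent is isomorphic to the underlying skew field. Recall that $\Q^F(\G)\epsilonFchi\simeq M_{\nFchi}(\DFchi)$. I would show that the image of $\ffFjeta$ under this isomorphism is an indecomposable idempotent; since all indecomposable idempotents of a simple artinian ring are conjugate by a unit, this image has the form $uE_{11}u^{-1}$, and, conjugation by $u$ being a ring automorphism, $\ffFjeta\bigl(\Q^F(\G)\epsilonFchi\bigr)\ffFjeta\simeq E_{11}M_{\nFchi}(\DFchi)E_{11}\simeq\DFchi$. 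As $\DFchi$ is only well-defined up to isomorphism as a simple component of \eqref{eq:QFG-Wedderburn}, the equality in the statement should be read as fixing the concrete model $\DFchi\colonequals\ffFjeta\Q^F(\G)\ffFjeta$; the argument confirms this is legitimate and independent of $j$.

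The steps are: (i) there is no loss of generality in taking $\eta$ to be one of the constituents $\eta_1,\ldots,\eta_{\vf}$ of \eqref{eq:res-eta-i}—the $\Gal(F(\eta)/F_\chi)$-orbit of any constituent is one of those appearing there—so that $\ffFjeta$ is among the $\ffFjetai$ and \cref{prop:ff-E-j-eta-i} applies to it directly; alternatively, the construction of $\ffFjetai$ and the proof of that proposition go through verbatim with $\eta$ in place of $\eta_i$. (ii) By \eqref{eq:epsilon-ff}, the idempotent $\ffFjeta$ is a summand of the primitive central idempotent $\epsilonFchi$, so $\ffFjeta\epsilonFchi=\epsilonFchi\ffFjeta=\ffFjeta$ and hence $\ffFjeta\Q^F(\G)\ffFjeta=\ffFjeta\bigl(\Q^F(\G)\epsilonFchi\bigr)\ffFjeta$. (iii) Being an indecomposable idempotent is equivalent to the corner ring having no nontrivial idempotents, a property preserved under the ring isomorphism $\Q^F(\G)\epsilonFchi\simeq M_{\nFchi}(\DFchi)$; thus the image of $\ffFjeta$ is an indecomposable idempotent of the matrix ring, and the conjugacy argument above concludes.

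I do not expect a genuine obstacle here: the substantive work is already contained in \cref{prop:ff-E-j-eta-i} (indecomposability of $\ffFjeta$) and \cref{invariants-of-DFchi} (the matrix size and Schur index). The only points deserving a moment's care are checking that $\ffFjeta$ lies in the $\epsilonFchi$-component rather than in some other simple factor—which is exactly \eqref{eq:epsilon-ff}—and the standard fact that indecomposable idempotents in a simple artinian ring are mutually conjugate with corner rings isomorphic to the skew field; a brief self-contained justification of the latter (via simplicity of the minimal left ideal $M_{\nFchi}(\DFchi)E_{11}$, or via uniqueness of the indecomposable projective module) can be supplied if one prefers not to cite it.
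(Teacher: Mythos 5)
Your argument is correct and is essentially the paper's own proof: the paper simply cites \cref{prop:ff-E-j-eta-i} together with \eqref{eq:epsilon-ff}, and your write-up spells out exactly these two ingredients plus the standard fact that an indecomposable idempotent in a simple artinian ring cuts out a corner ring isomorphic to the underlying skew field. The extra care you take (reading the stated equality as fixing a concrete model of $\DFchi$, and noting that $\eta$ may be taken among the $\eta_i$) is consistent with the paper's intent and needs no change.
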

\begin{proof}
    This follows from \cref{prop:ff-E-j-eta-i} and \eqref{eq:epsilon-ff}.
\end{proof}

Recall that in the Wedderburn decomposition \eqref{eq:QFG-Wedderburn} of $\Q^F(\G)$, the character $\chi$ runs through the $\sim_F$-equivalence classes of irreducible characters of $\G$ with open kernel, and $\chi\sim_F\chi'$ if there is a $\sigma\in\Gal( F_\chi/ F)$ such that ${}^\sigma(\res^\G_H \chi)=\res^\G_H \chi'$. 
Heuristically, the centre $\cent(\DFchi)$ is closely connected to $F_\chi$: this is \cite[Lemma~6.8 and Corollary~6.11]{W} in the totally ramified case, and see also \cite[Proposition~1.5]{NickelConductor}.
As $\DFchi$ is cut out by $\ff$-idempotents (\cref{prop:ff-E-j-eta-i}), it is therefore opportune to work with $\ff$-idempotents defined over $F_\chi$ rather than $F$. This can be done by making an appropriate choice for the representative of the $\sim_F$-equivalence class of $\chi$:
\begin{corollary} \label{DFchichi}
	For every $\chi\in\Irr(\G)$, there is a $\chi^*\in\Irr(\G)$ such that $\chi^*\sim_ F\chi$ and $\DFchi = D_{F_{\chi^*},\chi^*}$.
\end{corollary}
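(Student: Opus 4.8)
The strategy is to pass from $F$ to the extension $F_\chi$ --- which is abelian over $F$, since the character values on $H$ lie in a cyclotomic extension, hence Galois and separable --- and to compare two descriptions of the block cut out by $\epsilonFchi$ in $\Q^{F_\chi}(\G)$. First note two reductions: if $\chi'\sim_F\chi$ then $\res^\G_H\chi'={}^\sigma(\res^\G_H\chi)$ for some $\sigma\in\Gal(F_\chi/F)$, so $F_{\chi'}=\sigma(F_\chi)=F_\chi$, and $D_{F,\chi'}=\DFchi$ since $\chi,\chi'$ index the same summand of \eqref{eq:QFG-Wedderburn}; hence it suffices to find $\chi^*\sim_F\chi$ with $\DFchi\simeq D_{F_\chi,\chi^*}$. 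Using the base-change isomorphism $\Q^{F_\chi}(\G)\simeq F_\chi\otimes_F\Q^F(\G)$ we get $\Q^{F_\chi}(\G)\epsilonFchi\simeq M_{\nFchi}\bigl(F_\chi\otimes_F\DFchi\bigr)$.

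On the other hand, for each $\sigma\in\Gal(F_\chi/F)$ choose a lift $\widetilde\sigma$ to an automorphism of a fixed algebraic closure of $\QQ_p$ over $F$ and set $\chi_\sigma\colonequals{}^{\widetilde\sigma}\chi\in\Irr(\G)$; this has open kernel, satisfies $\chi_\sigma\sim_F\chi$, and since $\res^\G_H\chi$ has values in $F_\chi$ the restriction $\res^\G_H\chi_\sigma={}^\sigma(\res^\G_H\chi)$ depends only on $\sigma$. A direct computation with \eqref{eq:def-epsilon} gives $\sigma(e_\chi)=e_{\chi_\sigma}=\epsilon_{F_\chi,\chi_\sigma}$, whence $\epsilonFchi=\sum_{\sigma\in\Gal(F_\chi/F)}\epsilon_{F_\chi,\chi_\sigma}$. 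The crucial point is that the $\chi_\sigma$ are pairwise $\sim_{F_\chi}$-inequivalent: since $(F_\chi)_{\chi_\sigma}=F_\chi$, the group $\Gal\bigl((F_\chi)_{\chi_\sigma}/F_\chi\bigr)$ is trivial, so $\chi_\sigma\sim_{F_\chi}\chi_{\sigma'}$ forces $\res^\G_H\chi_\sigma=\res^\G_H\chi_{\sigma'}$, i.e.\ $\sigma$ and $\sigma'$ agree on every $\chi(h)$, $h\in H$, hence $\sigma=\sigma'$ as $F_\chi$ is generated over $F$ by these values. Thus $\epsilonFchi$ is a sum of $(F_\chi:F)$ distinct primitive central idempotents of $\Q^{F_\chi}(\G)$ and
\[
\Q^{F_\chi}(\G)\epsilonFchi\;=\;\bigoplus_{\sigma\in\Gal(F_\chi/F)}M_{n_{F_\chi,\chi_\sigma}}\bigl(D_{F_\chi,\chi_\sigma}\bigr).
\]

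Comparing the two descriptions, the semisimple $F_\chi$-algebra $F_\chi\otimes_F\DFchi$ has exactly $(F_\chi:F)$ simple components. Its centre is $F_\chi\otimes_F Z$ with $Z\colonequals\cent(\DFchi)$; writing $F_\chi=F[x]/(g)$ with $g$ the (separable) minimal polynomial of a primitive element, $F_\chi\otimes_F Z\simeq Z[x]/(g)$ has at most $(F_\chi:F)$ field factors, with equality precisely when $g$ splits into linear factors over $Z$, which --- $F_\chi/F$ being Galois --- happens if and only if $F_\chi$ embeds into $Z$ over $F$. Hence $F_\chi\subseteq\cent(\DFchi)$, so $F_\chi\otimes_F\DFchi=(F_\chi\otimes_F Z)\otimes_Z\DFchi$ is a direct product of $(F_\chi:F)$ copies of $\DFchi$. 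By uniqueness of the Wedderburn decomposition, every simple component in the displayed direct sum is then isomorphic to $M_{\nFchi}(\DFchi)$, so $D_{F_\chi,\chi_\sigma}\simeq\DFchi$ for all $\sigma$; in particular $\chi^*\colonequals\chi$ works, since $F_{\chi^*}=F_\chi$ and $\DFchi=D_{F,\chi^*}\simeq D_{F_\chi,\chi}=D_{F_{\chi^*},\chi^*}$ (and one could equally take any conjugate ${}^{\widetilde\sigma}\chi$). The one genuinely nontrivial step is the inclusion $F_\chi\subseteq\cent(\DFchi)$, which is exactly what the double count of simple components delivers --- using that distinct Galois conjugates of $\chi$ over $F$ stay inequivalent over $F_\chi$, together with the base-change isomorphism for $\Q^F(\G)$; the remaining steps (the decomposition of $\epsilonFchi$, the identity $\sigma(e_\chi)=\epsilon_{F_\chi,\chi_\sigma}$, and the Wedderburn bookkeeping) are routine.
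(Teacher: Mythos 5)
Your argument is correct, but it follows a genuinely different route from the paper. The paper's proof stays with the $\ff$-idempotent description of \cref{cor:Dchi-ff}: it embeds $\DFchi=\ffFjeta\Q^F(\G)\ffFjeta$ into $\ffFjeta\Q^{F_\chi}(\G)\ffFjeta=\bigoplus_{\chi'\sim_F\chi}D_{F_{\chi'},\chi'}$, projects onto one component to get some $\chi^*$ with $\DFchi\hookrightarrow D_{F_{\chi^*},\chi^*}$, and forces surjectivity by comparing Schur indices via \cref{invariants-of-DFchi}; in particular it uses the indecomposability result \cref{prop:ff-E-j-eta-i} and Nickel's index formula. You instead base-change the whole $\chi$-block: from $\Q^{F_\chi}(\G)\epsilonFchi\simeq M_{\nFchi}(F_\chi\otimes_F\DFchi)$ on one side, and the decomposition $\epsilonFchi=\sum_{\sigma\in\Gal(F_\chi/F)}\epsilon_{F_\chi,\chi_\sigma}$ into $(F_\chi:F)$ pairwise $\sim_{F_\chi}$-inequivalent primitive central idempotents on the other, you count simple components, deduce that the minimal polynomial of a primitive element of $F_\chi/F$ splits over $\cent(\DFchi)$, hence $F_\chi\otimes_F\DFchi\simeq\DFchi^{(F_\chi:F)}$, and conclude by uniqueness of the Wedderburn decomposition that $D_{F_\chi,\chi_\sigma}\simeq\DFchi$ for \emph{every} $\sigma$, so one may even take $\chi^*=\chi$. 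This buys independence from \cref{prop:ff-E-j-eta-i} and \cref{invariants-of-DFchi} (no Schur-index input), and yields the stronger by-products that $F_\chi$ embeds into $\cent(\DFchi)$ (consistent with \cref{centre-with-Gcp} and Nickel's description of the centre) and that the representative need not be changed; the paper's approach, by contrast, produces the equality via a concrete projection compatible with the idempotent framework that is then exploited in \cref{cor:D}. Two points you should make explicit: the identification $\Q^{F_\chi}(\G)\simeq F_\chi\otimes_F\Q^F(\G)$ is not proved in this paper and needs a reference or a short argument (e.g.\ that $F_\chi\otimes_F\Q^F(\Gamma_0)$ is again a field, by Gauss's lemma in $\OO_F[[T]]$, after which everything is a finite free base change); and you tacitly use that $\DFchi$ is finite dimensional over its centre (true here because $\Q^F(\Gamma_0)$ is central and $\Q^F(\G)$ is finite dimensional over it), which is what makes the central-simple-algebra base-change count of simple factors legitimate.
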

\begin{proof}
	Let $1\le j\le n_\eta$ be arbitrary.
    The inclusion $\Q^ F(\G)\subseteq\Q^{ F_\chi}(\G)$ together with \cref{prop:ff-E-j-eta-i} induces an inclusion
	\[\DFchi = \ffFjeta \Q^ F(\G) \ffFjeta \hookrightarrow \ffFjeta \Q^{ F_\chi}(\G) \ffFjeta=\bigoplus_{\chi'\sim_ F \chi} D_{F_{\chi'},\chi'}.\]
	Here the sum is taken over $F_\chi$-equivalence classes of irreducible characters which are $ F$-equivalent to $\chi$. Projecting to the components on the right, we have maps $\DFchi\to D_{F_{\chi'},\chi'}$. A homomorphism of skew fields is either injective or zero, and since the displayed map above is injective, there is a character $\chi^*$ on the right hand side such that $\DFchi\hookrightarrow D_{F_{\chi^*},\chi^*}$. {We have $\sigma\circ\res^\G_H\chi=\res^\G_H\chi^*$ for some $\sigma\in\Gal(F_\chi/F)$, and therefore
    \begin{align*}
        F_{\chi^*}=F\big(\chi^*(h):h\in H\big)=F\big(\sigma(\chi(h)):h\in H\big)=F\big(\chi(h):h\in H\big)=F_\chi.
    \end{align*}
    Similarly, if $\eta^*\mid\res^G_H\chi^*$ is an irreducible constituent, then $\eta^*=\sigma\circ\eta$ for some irreducible constituent $\eta\mid\res^G_H\eta$, and have $F(\eta^*)=F(\eta)$. It follows that $s_\eta$ and $(F(\eta):F_\chi)=w_\chi/\vf$ are the same for $\chi$ and $\chi^*$, and \cref{invariants-of-DFchi} now shows that $\DFchi$ and $D_{F_{\chi^*},\chi^*}$ have the same Schur indices. This forces the injection to be an isomorphism.}
\end{proof}

We are now ready to prove \cref{item:main-D}. More precisely, we will prove the following, which also clarifies the relationship between $\DFchi$ and $\DWchi$.
\begin{corollary} \label{cor:D} We have
	\[\DFchi \simeq \bigoplus_{d=0}^{f-1} \DWchi (\gcpf)^d \simeq \bigoplus_{\substack{\ell=0 \\ \vf \mid \ell}}^{p^{n_0}-1} \tD_\eta (\gcpf)^{\frac{\ell}{\vf }} \simeq \Quot\left(\OO_{D_\eta}[[X,\tauf,\tauf-\id]]\right)\]
	with conjugation by $\gcpf$ acting as $\tauf$ on $\DWchi$ resp. $\tD_\eta$.
\end{corollary}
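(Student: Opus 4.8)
The plan is to establish the three displayed isomorphisms in turn. After reducing to the case $F=F_\chi$, the first isomorphism $\DFchi\simeq\bigoplus_{d=0}^{f-1}\DWchi(\gcpf)^d$ carries essentially all the weight; the other two are bookkeeping. First I would invoke \cref{DFchichi}: replacing $\chi$ by the character $\chi^*$ produced there gives $\DFchi=D_{F_{\chi^*},\chi^*}$ with $F_{\chi^*}=F_\chi$, and, as recorded in the proof of \cref{DFchichi}, $\eta$, $F(\eta)$, $w_\chi$, $\vf$, $\tauf$, $D_\eta$ and $\tD_\eta$ are unaffected; so we may and do assume $F=F_\chi$. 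Then $W$ is the maximal unramified subextension of $F(\eta)/F$, with $(W:F)=f$, $(F(\eta):W)=e$ and $ef=(F(\eta):F)=w_\chi/\vf$ by \cref{item:Feta-Fchi-degree}, and $W(\eta)/W_\chi=F(\eta)/W$ is totally ramified, so \cref{thm:indecomposability-W}, \cref{DchiW-bigoplus} and \cref{recall-multiplication} all apply over $W$; by \cref{item:change-tau} and \cref{item:changing-gcp} we have $v_\chi^W=\vf f$, $\tau_W=\tauf^f$ and $\gamma^{\prime\prime}_{W,\eta}=(\gcpf)^f$. Finally, by \cref{cor:Dchi-ff} we start from $\DFchi=\ffFjeta\,\Q^F(\G)\,\ffFjeta$ with $\ffFjeta=\sum_{\psi\in\Gal(W/F)}\psi(\fWjeta)$ an orthogonal sum, and $\DFchi$ is a skew field by \cref{prop:ff-E-j-eta-i}.

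The next step is to show that $\gcpf$ lies in $\DFchi$ and that conjugation by $\gcpf$ cyclically permutes the idempotents $\psi(\fWjeta)$. Since ${}^{\gamma^{\vf}}\eta={}^{\tauf}\eta$ with $\tauf$ fixing $F=F_\chi$, conjugation by $\gamma^{\vf}$ permutes the summands $e({}^{\sigma}\eta)$, $\sigma\in\Gal(F(\eta)/F)$, of $\epsilon_F(\eta)$, hence commutes with $\epsilon_F(\eta)$; as $a_{F,1}\in F[H]\epsilon_F(\eta)^\times$, this gives $\gcpf\in\Q^F(\G)\epsilon_F(\eta)$, and conjugation by $\gcpf$ acts on $\Q^W(\G)\epsilon_F(\eta)=\bigoplus_{\psi\in\Gal(W/F)}\Q^W(\G)\,\psi(\epsilon_W(\eta))$ by the cyclic shift $\psi(\epsilon_W(\eta))\mapsto(\psi\psi_0)(\epsilon_W(\eta))$, where $\psi_0$ is the image of $\tauf$ in $\Gal(W/F)$, which is a generator. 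Combining this with the fact that conjugation by $(\gcpf)^f=\gamma^{\prime\prime}_{W,\eta}$ fixes the corner idempotent $\fWjeta$ (the content of \cref{recall-multiplication} over $W$) refines the statement to: conjugation by $\gcpf$ sends $\fWjeta$ to $\psi_0(\fWjeta)$, hence cyclically permutes all the $\psi(\fWjeta)$. In particular $\gcpf$ commutes with $\ffFjeta=\sum_{d=0}^{f-1}\gcpf^{d}\fWjeta\gcpf^{-d}$, so $\gcpf\in\ffFjeta\,\Q^F(\G)\,\ffFjeta=\DFchi$.

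Now comes the \emph{main obstacle}: a Galois-descent argument identifying $\DFchi$ as a crossed product of $\DWchi$ by $\Gal(W/F)$. Since $\ffFjeta\in F[H]$ commutes with the scalars $W$, base change along the unramified extension $W/F$ gives $\ffFjeta\,\Q^W(\G)\,\ffFjeta=\DFchi\otimes_F W$; as $\epsilonFchi=e_\chi=\epsilon_{W,\chi}$, the space $\Q^W(\G)\epsilonFchi$ is the $\chi$-component over $W$, and comparing matrix sizes ($\nFchi=n_\eta\vf$ over $F$ versus $n_{W,\chi}=n_\eta v_\chi^W=n_\eta\vf f$ over $W$, by \cref{invariants-of-DFchi}) yields $\DFchi\otimes_F W\simeq M_f(\DWchi)$, with the idempotent $\gcpf^{d}\fWjeta\gcpf^{-d}$ corresponding to the $(d,d)$-diagonal idempotent and $\fWjeta\,\Q^W(\G)\,\fWjeta=\DWchi$ to the $(0,0)$-corner. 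The group $\Gal(W/F)$ acts semilinearly on $M_f(\DWchi)\simeq\DFchi\otimes_F W$ with fixed ring $\DFchi$, and its generator $\psi_0$ permutes these $f$ corners in exactly the way conjugation by $\gcpf$ does; therefore $\psi_0^{-1}\circ(\text{conjugation by }\gcpf)$ fixes every corner, restricts to a skew-field automorphism of $\DWchi$ whose $f$-th power is conjugation by $(\gcpf)^f=\gamma^{\prime\prime}_{W,\eta}\in\DWchi$, hence inner. Reading off the fixed ring of this action, and using that $\gcpf\in\DFchi$ realizes the cyclic shift, one obtains $\DFchi\simeq\bigoplus_{d=0}^{f-1}\DWchi(\gcpf)^d$ with conjugation by $\gcpf$ inducing $\tauf$ on $\DWchi$. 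I expect the delicate points here to be the precise matching of the two permutation actions and the verification that the descended object is the asserted crossed product.

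For the remaining two isomorphisms, I would substitute the totally ramified description $\DWchi\simeq\bigoplus_{\vf f\mid\ell'}\tD_\eta(\gamma^{\prime\prime}_{W,\eta})^{\ell'/(\vf f)}=\bigoplus_{\vf f\mid\ell'}\tD_\eta(\gcpf)^{\ell'/\vf}$ from \cref{DchiW-bigoplus} over $W$, so that $\bigoplus_{d=0}^{f-1}\DWchi(\gcpf)^d=\bigoplus_{d=0}^{f-1}\bigoplus_{\vf f\mid\ell'}\tD_\eta(\gcpf)^{\ell'/\vf+d}$, and then re-index by $\ell=\ell'+\vf d$: since $\vf f\mid p^{n_0}$, every multiple of $\vf$ in $\{0,\dots,p^{n_0}-1\}$ arises uniquely this way, so the sum equals $\bigoplus_{\vf\mid\ell}\tD_\eta(\gcpf)^{\ell/\vf}$; the bijection is a ring isomorphism because on both sides the multiplication is governed by conjugation by $\gcpf$ acting as $\tauf$ on $\tD_\eta$ together with centrality of $(\gcpf)^{p^{n_0}/\vf}$ (\cref{recall-multiplication}). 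The last isomorphism $\bigoplus_{\vf\mid\ell}\tD_\eta(\gcpf)^{\ell/\vf}\simeq\Quot\bigl(\OO_{D_\eta}[[X;\tauf,\tauf-\id]]\bigr)$ is exactly \cref{item:main-D} in the totally ramified case, which is asserted in \cref{DchiW-bigoplus} and proved in \cite[\S3]{W} for the data $(D_\eta,\tauf,\vf)$. Composing the three isomorphisms then gives \cref{item:main-D} in full generality.
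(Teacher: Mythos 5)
Your overall skeleton matches the paper's: you reduce to $F=F_\chi$ via \cref{DFchichi}, you use the totally ramified results over $W$ (\cref{thm:indecomposability-W}, \cref{DchiW-bigoplus}) together with the base-change identities \cref{item:change-tau} and \cref{item:changing-gcp}, and your re-indexing argument for the second and third isomorphisms is exactly the paper's identification of $\bigoplus_d \DWchi(\gcpf)^d$ with $\bigoplus_{\vf\mid\ell}\tD_\eta(\gcpf)^{\ell/\vf}$ and with $\Quot(\OO_{D_\eta}[[X;\tauf,\tauf-\id]])$. Where you diverge is the main step: the paper does \emph{not} run a Galois-descent/crossed-product recognition. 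It constructs an explicit map $\Delta\colon \DFchi\to\bigoplus_d\DWchi(\gcpf)^d$, namely $\Delta(\ff^{(j)}_{F_\chi,\eta}x\,\ff^{(j)}_{F_\chi,\eta})=\sum_{d}\fWjeta x\,\tauf^d(\fWjeta)(\gcpf)^d$, verifies multiplicativity by hand using \cref{sigma-f-L-j-i-orthogonal} and the conjugation action of $\gcpf$, gets injectivity because $\DFchi$ is a skew field, and gets surjectivity from the Schur-index comparison of \cref{invariants-of-DFchi} with \cite[Corollary~3.3]{W}. Your route could in principle work, but the two steps that carry all the weight are asserted rather than proved.

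First, your inference that conjugation by $\gcpf$ sends $\fWjeta$ to $\psi_0(\fWjeta)$ is a non sequitur as written: knowing that conjugation by $\gcpf$ shifts the blocks $\psi(\epsilon_W(\eta))$ and that its $f$-th power fixes $\fWjeta$ only tells you that $\gcpf\fWjeta\gcpf^{-1}$ is \emph{some} indecomposable idempotent under $\psi_0(\epsilon_W(\eta))$ whose $f$-fold iterate returns to $\fWjeta$; it is a priori only conjugate to $\psi_0(\fWjeta)$, not equal to it. The statement is in fact true, but it needs the finer input from \cref{recall-multiplication} — that conjugation by $\gcpf$ acts as $\pt$ on the $F$-level corner, in particular fixes $\fetaj$ — combined with the compatibility of $\fWjeta$ with $\fetaj$ (so that $\fWjeta=\epsilon_W(\eta)\fetaj$ up to the choices made in the unramified base change); the same precision is what later identifies the automorphism induced on $\DWchi$ as $\tauf$ itself, rather than merely some lift of $\tauf|_W$ up to inner automorphisms, which your ``whose $f$-th power is inner'' argument cannot distinguish. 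Second, ``reading off the fixed ring'' is exactly where the proof is missing: note that $\DWchi=\fWjeta\Q^W(\G)\fWjeta$ is \emph{not} a subring of $\DFchi=\ffFjeta\Q^{F_\chi}(\G)\ffFjeta$ (its elements have $W$-coefficients and are cut by $\fWjeta$, not $\ffFjeta$), so the asserted crossed product cannot be obtained as an internal direct sum inside $\DFchi$; one must actually write down the isomorphism — e.g.\ send $z\in\DFchi$ to $\sum_d\fWjeta z\,\tauf^d(\fWjeta)(\gcpf)^d$, using Galois invariance of $z$ to see no information is lost — and then verify multiplicativity and bijectivity. That verification is precisely the computation the paper performs for $\Delta$ (multiplicativity via the orthogonality relations forcing the index matching, plus the injectivity/Schur-index argument); alternatively you would have to quote a precise descent or crossed-product lemma and check its hypotheses, which your sketch does not do. So the plan is salvageable, but as it stands both the identification of the $\gcpf$-action on the idempotents and the final descent step are gaps, and filling them essentially reproduces the paper's explicit argument.
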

For brevity, we will write $\AAchi$ resp. $\Achi$ for the first resp. second direct sum in \cref{cor:D}. Note that $(\gcpf)^f=\gamma^{\prime\prime}_{W,\eta}$ by \cref{item:changing-gcp}, and that the element $(\gcpf)^{p^{n_0}/\vf}$ is in the centre of $\tD_\eta$ by \cref{item:multiplication-rule}, so $\AAchi$ and $\Achi$ are well defined. 

\begin{proof}
    We will verify the isomorphisms from right to left. First, observe that the total ring of quotients on the right is a skew field: this follows from \cite[Corollary 2.10(i)]{VenjakobWPT}.
    The isomorphism between the total ring of quotients and $\Achi$ was established in \cite[Proposition 6.9]{W}.

    Since $F(\eta)/W$ is totally ramified, \cref{DchiW-bigoplus} holds for the base field $W$, that is, we have
    \[\DWchi\simeq \bigoplus_{\substack{\ell=0 \\ v^W_\chi \mid \ell}}^{p^{n_0}-1} \tD_\eta \left(\gamma^{\prime\prime}_{W,\eta}
    \right)^{\frac{\ell}{v^W_\chi}},\]
    with $\gamma^{\prime\prime}_{W,\eta}$ acting as $\tau_W$.
    Using the base change statements in \cref{item:change-tau,item:changing-gcp}, this verifies $\AAchi\simeq\Achi$.

    The total ring of quotients on the right (and hence also $\AAchi$ and $\Achi$) has Schur index $(F(\eta):F_\chi) s_\eta=w_\chi/\vf \cdot s_\eta$, see \cite[Corollary~3.3]{W}. \Cref{invariants-of-DFchi} shows that this agrees with the Schur index of $\DFchi$.

    We shall construct an injective ring homomorphism $\Delta: \DFchi\hookrightarrow \AAchi$. Such a homomorphism is necessarily an isomorphism: indeed, we then have $\cent(\AAchi)\subseteq \Delta(\cent(\DFchi)) \subseteq \Delta(\DFchi)\subseteq \AAchi$, with the square of the respective Schur indices being 
    \[\dim_{\cent(\AAchi)} \AAchi=\left(\frac{w_\chi}{\vf}\cdot s_\eta\right)^2=\sFchi^2=\dim_{\Delta(\cent(\DFchi))} \Delta(\DFchi).\] 
    These force $\Delta(\DFchi)=\AAchi$, thereby verifying the isomorphism $\DFchi\simeq\AAchi$.

    By \cref{cor:Dchi-ff,DFchichi}, for any $1\le j\le n_\eta$, we have
    \[\ffFjeta \Q^F(\G) \ffFjeta \simeq \DFchi = D_{F_{\chi^*},\chi^*} \simeq \ff^{(j)}_{F_{\chi^*},\eta} \Q^{F_{\chi^*}}(\G) \ff^{(j)}_{F_{\chi^*},\eta}.\]
    After possibly twisting $\chi$ by a type~W character, that is, a character of $\G$ with trivial restriction to $H$, we may assume that $\chi=\chi^*$.

    Let $x\in\Q^{F_\chi}(\G)$. Unravelling the definition \eqref{eq:ff-E-def} of $\ffFchijeta$ and using the fact that $\Gal(W/F_\chi)$ is cyclic of order $f$ generated by $\tauf|_W$, we have:
    \[\ffFchijeta x \ffFchijeta = \sum_{k,\ell=0}^{f-1} \tauf^k\!\left(\fWjeta\right) x \tauf^\ell\!\left(\fWjeta\right) = \sum_{d=0}^{f-1} \sum_{k=0}^{f-1} \tauf^k\!\left( \fWjeta x \tauf^d\!\left(\fWjeta\right)\right).\]
    Here we used that $x\in\Q^{F_\chi}(\G)$, whence $\tauf(x)=x$. Since each $\tauf^d\!\left(\fWjeta\right)$ is an indecomposable idempotent in $\Q^W(\G)$ by \cref{thm:indecomposability-W}, and $x\in\Q^{F_\chi}(\G)\subseteq \Q^{W}(\G)$, we have 
    \[\fWjeta x \tauf^d\!\left(\fWjeta\right)\in \fWjeta \Q^W(\G) \tauf^d\!\left(\fWjeta\right)\simeq\DWchi.\] 
    Now we can define the image under $\Delta$:
    \begin{align*}
        \Delta\left(\ffFchijeta x \ffFchijeta\right) &\colonequals \sum_{d=0}^{f-1} \fWjeta x \tauf^d\!\left(\fWjeta\right) \cdot \left(\gcpf\right)^d .
    \end{align*}
    This defines a map
    \[\Delta: \DFchi=\ffFchijeta \Q^{F_{\chi}}(\G) \ffFchijeta \to \bigoplus_{d=0}^{f-1} \DWchi \left(\gcpf\right)^d = \AAchi.\]
    We now check that $\Delta$ is a homomorphism. Additivity is clear; we verify multiplicativity. Let $x,y\in \Q^{F_\chi}(\G)$. We first compute the image of the products:
    \begin{align*}
        \Delta\left(\ffFchijeta x \ffFchijeta \cdot \ffFchijeta y \ffFchijeta\right) 
        &= \Delta\left(\sum_{d,k,d',k'=0}^{f-1} \tauf^k\!\left(\fWjeta x \tauf^d\left(\fWjeta\right)\right)\tauf^{k'}\!\left(\fWjeta y \tauf^{d'}\!\left(\fWjeta\right)\right)\right) \\
        \intertext{The orthogonality relation \cref{sigma-f-L-j-i-orthogonal} forces $k'=k+d$:}
        &= \Delta\left(\sum_{d,k,d'=0}^{f-1} \tauf^k\!\left(\fWjeta x \tauf^d\!\left(\fWjeta\right) y \tauf^{d+d'}\!\left(\fWjeta\right)\right)\right) \\
        \intertext{Let $d''\colonequals d+d'$:}
        &= \Delta\left(\sum_{d''=0}^{f-1}\sum_{k=0}^{f-1} \tauf^k\!\left(\sum_{d=0}^{f-1} \fWjeta x \tauf^d\!\left(\fWjeta\right) y \tauf^{d''}\!\left(\fWjeta\right)\right)\right) \\
        &= \sum_{d''=0}^{f-1} \sum_{d=0}^{f-1} \fWjeta x \tauf^d\!\left(\fWjeta\right) y \tauf^{d''}\!\left(\fWjeta\right)\left(\gcpf\right)^{d''}
    \end{align*}
    We also compute the product of the images:
    \begin{align*}
        \Delta\left(\ffFchijeta x \ffFchijeta\right) \cdot \Delta\left(\ffFchijeta y \ffFchijeta\right) 
        &= \sum_{d,d'=0}^{f-1} \fWjeta x \tauf^d\!\left(\fWjeta\right) \left(\gcpf\right)^d \cdot \fWjeta y \tauf^{d'}\!\left(\fWjeta\right) \cdot \left(\gcpf\right)^{d'}\\
        \intertext{Recall that conjugation by $\gcpf$ acts as $\tauf$. Since $y\in\Q^{F_\chi}(\G)$, it is fixed by $\tauf$. Setting $d''\colonequals d+d'$, we obtain:}
        &= \sum_{d''=0}^{f-1} \sum_{d'=0}^{f-1} \fWjeta x \tauf^d\!\left(\fWjeta\right) y \tauf^{d''}\!\left(\fWjeta\right) \cdot \left(\gcpf\right)^{d''},\\
    \end{align*}
    which verifies multiplicativity of $\Delta$.
    
    Since $\Delta$ is a ring homomorphism and its domain $D_\chi$ is a skew field, $\Delta$ is either the zero map or injective. It is clear that $\Delta\ne0$, hence $\Delta$ is an injective homomorphism (and in fact an isomorphism, as explained above).
\end{proof}

The centre of $\DFchi$ can be described in terms of $\gcpf$. This should be compared to Nickel's description of the centre \cite[Proposition~1.5]{NickelConductor}. Let $\Gcpf\colonequals \overline{\langle\gcpf\rangle}$ denote the procyclic group topologically generated by $\gcpf$.
\begin{corollary} \label{centre-with-Gcp}
$\cent\left(\DFchi\right) \simeq \Q^{ F_\chi}\left((\Gcpf)^{w_\chi/\vf}\right)$.
\end{corollary}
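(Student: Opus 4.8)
The plan is to combine the description $\DFchi\simeq\Achi=\bigoplus_{\vf\mid\ell}\tD_\eta(\gcpf)^{\ell/\vf}$ from \cref{cor:D} with the known centre of the skew power series ring. First I would compute $\cent(\Achi)$ directly from the grading. An element $\sum_{\vf\mid\ell}d_\ell(\gcpf)^{\ell/\vf}$ with $d_\ell\in\tD_\eta$ is central iff it commutes with all of $\tD_\eta$ and with $\gcpf$. Commuting with $\gcpf$ (whose conjugation action permutes the homogeneous pieces trivially, since $(\gcpf)^{p^{n_0}/\vf}$ is central) forces no constraint on the grading, while commuting with $\tD_\eta$ forces each $d_\ell$ to satisfy $d_\ell\, e=\tauf^{\ell/\vf}(e)\,d_\ell$ for all $e\in\tD_\eta$; since $\tauf^{\ell/\vf}$ is a nontrivial automorphism of $\tD_\eta$ unless $(F(\eta):F_\chi)=w_\chi/\vf$ divides $\ell/\vf$, this forces $d_\ell=0$ unless $(w_\chi/\vf)\mid(\ell/\vf)$, i.e. $w_\chi\mid\ell$; and when $w_\chi\mid\ell$ it forces $d_\ell\in\cent(\tD_\eta)=\Q^{F(\eta)}(\Gamma_0)$. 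Hence $\cent(\Achi)=\bigoplus_{w_\chi\mid\ell}\Q^{F(\eta)}(\Gamma_0)(\gcpf)^{\ell/\vf}$, the index running over $0\le\ell\le p^{n_0}-1$.

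Next I would identify this ring with $\Q^{F_\chi}((\Gcpf)^{w_\chi/\vf})$. The element $(\gcpf)^{w_\chi/\vf}$ topologically generates $(\Gcpf)^{w_\chi/\vf}$, and by \cref{item:multiplication-rule} and \cref{recall-multiplication} its $(p^{n_0}/w_\chi)$-th power, namely $(\gcpf)^{p^{n_0}/\vf}$, lies in $\cent(\tD_\eta)=\Q^{F(\eta)}(\Gamma_0)$ and in fact in $\Q^{F_\chi}(\Gamma_0)$ (being fixed by $\tauf$); so $(\Gcpf)^{w_\chi/\vf}$ is procyclic, an extension of $\Gamma_0$ by $\ZZ/(p^{n_0}/w_\chi)$, and $\Q^{F_\chi}((\Gcpf)^{w_\chi/\vf})=\bigoplus_{0\le m<p^{n_0}/w_\chi}\Q^{F_\chi}(\Gamma_0)\cdot(\gcpf)^{mw_\chi/\vf}$. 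Matching this term by term with the expression for $\cent(\Achi)$ reduces to the identification of coefficient rings $\Q^{F(\eta)}(\Gamma_0)\cong\Q^{F_\chi}(\Gamma_0)$ twisted appropriately: more precisely one checks that the subring of $\cent(\Achi)$ generated over $\Q^{F_\chi}(\Gamma_0)$ by $(\gcpf)^{w_\chi/\vf}$ already exhausts $\cent(\Achi)$, because $\Q^{F(\eta)}(\Gamma_0)=F(\eta)\otimes_{F_\chi}\Q^{F_\chi}(\Gamma_0)$ and $F(\eta)$ is spanned over $F_\chi$ by a normal basis whose Galois conjugates are produced by conjugating by $\gcpf$; this is the same mechanism by which the $\ff$-idempotents over $F_\chi$ were built from those over $W$ in \eqref{eq:ff-E-def}. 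Summing the geometric-series-type identity then yields the claimed isomorphism.

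The main obstacle I expect is the bookkeeping in this last identification: one must be careful that $(\Gcpf)^{w_\chi/\vf}$ is genuinely procyclic (so that $\Q^{F_\chi}((\Gcpf)^{w_\chi/\vf})$ makes sense and is a commutative completed group ring of the asserted form), and that the natural map sending the topological generator of $(\Gcpf)^{w_\chi/\vf}$ to $(\gcpf)^{w_\chi/\vf}\in\cent(\DFchi)$ is well-defined and surjective — the surjectivity requiring the normal-basis argument above to see that the $F(\eta)$-coefficients are not lost. A cleaner route, which I would prefer if available, is to invoke \cref{cor:D} together with \cite[Corollary~3.3]{W} or the analogous centre computation in \cite[Lemma~6.8]{W}: the centre of $\Quot(\OO_{D_\eta}[[X;\tauf,\tauf-\id]])$ is computed there to be the total ring of quotients of the power series ring over $\OO_{\cent(D_\eta)}^{\langle\tauf\rangle}=\OO_{F_\chi}$ in the variable $X^{w_\chi/\vf}$ (up to the usual identification $1+X\leftrightarrow\gamma$-type substitution), and transporting this through the isomorphism $\DFchi\simeq\Achi$ of \cref{cor:D}, under which $1+X$ corresponds to a scalar multiple of $(\gcpf)$, directly gives $\cent(\DFchi)\simeq\Q^{F_\chi}((\Gcpf)^{w_\chi/\vf})$. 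I would present the argument via $\cent(\Achi)$ as the primary proof and remark on the skew-power-series viewpoint as a cross-check.
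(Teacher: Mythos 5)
Your primary argument has a genuine error in the computation of $\cent(\Achi)$. Writing a candidate central element as $z=\sum_{\vf\mid\ell} d_\ell(\gcpf)^{\ell/\vf}$, you dismiss the condition that $z$ commute with $\gcpf$ as imposing ``no constraint on the grading''. But conjugation by $\gcpf$ acts as $\tauf$ on $\tD_\eta$ (\cref{recall-multiplication}, \cref{cor:D}), so $\gcpf\, z\,(\gcpf)^{-1}=\sum_\ell \tauf(d_\ell)(\gcpf)^{\ell/\vf}$, and centrality forces every coefficient to be $\tauf$-invariant. Combined with your conclusion that $w_\chi\mid\ell$ and $d_\ell\in\cent(\tD_\eta)=\Q^{F(\eta)}(\Gamma_0)$, this gives $d_\ell\in\Q^{F(\eta)}(\Gamma_0)^{\langle\tauf\rangle}=\Q^{F_\chi}(\Gamma_0)$. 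Your asserted centre $\bigoplus_{w_\chi\mid\ell}\Q^{F(\eta)}(\Gamma_0)(\gcpf)^{\ell/\vf}$ is therefore too large by a factor $(F(\eta):F_\chi)=w_\chi/\vf$: an element $d\,(\gcpf)^{\ell/\vf}$ with $d\in F(\eta)\setminus F_\chi$ is visibly not central, since conjugation by $\gcpf$ replaces $d$ by $\tauf(d)\ne d$. A dimension count confirms the discrepancy: with your centre, $\dim_{\cent(\Achi)}\Achi$ would be $s_\eta^2\,w_\chi/\vf$ instead of $\sFchi^2=s_\eta^2(w_\chi/\vf)^2$, contradicting \cref{invariants-of-DFchi}. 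Your second paragraph cannot repair this: the commutative subring generated over $\Q^{F_\chi}(\Gamma_0)$ by the single element $(\gcpf)^{w_\chi/\vf}$ cannot reproduce coefficients in $F(\eta)\setminus F_\chi$ (the normal-basis conjugates are produced by conjugation by $\gcpf$, which is not available inside that commutative subring), and it should not, because those elements are not central in the first place. Once the $\gcpf$-constraint is imposed, the term-by-term matching with $\Q^{F_\chi}\bigl((\Gcpf)^{w_\chi/\vf}\bigr)=\bigoplus_{0\le m<p^{n_0}/w_\chi}\Q^{F_\chi}(\Gamma_0)\,(\gcpf)^{m w_\chi/\vf}$ goes through directly (modulo the unit $a_{F,p^{n_0}/\vf}$ relating $(\gcpf)^{p^{n_0}/\vf}$ to $\gamma_0$), and the bookkeeping obstacle you worry about disappears.

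Two smaller remarks. First, the step ``$\tauf^{\ell/\vf}$ is a nontrivial automorphism of $\tD_\eta$, hence $d_\ell=0$'' needs the centre, not mere nontriviality: from $d_\ell e=\tauf^{\ell/\vf}(e)d_\ell$ for $e\in\Q^{F(\eta)}(\Gamma_0)$ one gets $(e-\tauf^{\ell/\vf}(e))d_\ell=0$, and since $\Q^{F(\eta)}(\Gamma_0)$ is a field, $d_\ell\ne0$ forces $\tauf^{\ell/\vf}$ to fix $F(\eta)$, i.e.\ $w_\chi\mid\ell$; a nontrivial but inner automorphism would not force $d_\ell=0$, so it is the action on the centre that matters (your conclusion is nonetheless correct, since $\tauf$ has order $w_\chi/\vf$ on $D_\eta$). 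Second, the ``cleaner route'' you sketch at the end --- transporting a centre computation from \cite{W} through the isomorphism of \cref{cor:D} --- is in substance the paper's actual proof, which simply combines \cref{cor:D} with \cite[Proposition~6.9]{W}; had you made that your primary argument, the proof would be complete.
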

\begin{proof}
    This follows directly from \cref{cor:D} and \cite[Proposition 6.9]{W}.
\end{proof}

Finally, we describe maximal orders in $\Q^F(\G)$.
\begin{corollary} \label{cor:max-orders}
Every maximal $\Lambda^{\OO_F}(\Gamma_0)$-order in $\Q^F(\G)$ is isomorphic to one of the form
\[\bigoplus_{\chi\in\Irr(\G)/\sim_{ F}} u_\chi M_{\nFchi} \left( \OO_{D_\eta}[[X; \tauf, \tauf-\id]] \right) u_\chi^{-1} ,\]
where $u_\chi\in \GL_{\nFchi}\left(\Quot\left(\OO_{D_\eta}[[X; \tauf, \tauf-\id]]\right)\right)$.
\end{corollary}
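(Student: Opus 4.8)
The plan is to combine the Wedderburn decomposition \eqref{eq:QFG-Wedderburn} with the standard theory of maximal orders over a complete (or at least a reasonably well-behaved) base ring, applied simple-component by simple-component. First I would recall from \cref{thm:main} that
\[\Q^F(\G) \simeq \bigoplus_{\chi\in\Irr(\G)/\sim_F} M_{\nFchi}\left(\Quot\left(\OO_{D_\eta}[[X;\tauf,\tauf-\id]]\right)\right),\]
where in each summand $\eta$ is a chosen irreducible constituent of $\res^\G_H\chi$. Since $\Lambda^{\OO_F}(\Gamma_0)$ is a complete commutative noetherian local domain (a power series ring in one variable over $\OO_F$) with field of fractions contained in the centre of each simple component, a maximal $\Lambda^{\OO_F}(\Gamma_0)$-order in a direct product of simple algebras decomposes as the direct product of maximal orders in each factor; so it suffices to treat a single matrix ring $M_{\nFchi}(\DFchi)$ over its centre $\cent(\DFchi) \simeq \Q^{F_\chi}\left((\Gcpf)^{w_\chi/\vf}\right)$ (\cref{centre-with-Gcp}).

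Next I would identify a distinguished maximal order in $M_{\nFchi}(\DFchi)$. The ring $\OO_{D_\eta}[[X;\tauf,\tauf-\id]]$ is a skew power series ring over the (maximal) order $\OO_{D_\eta}$ in the skew field $D_\eta$; by the Weierstraß theory of Schneider--Venjakob (cf.\ \cite{VenjakobWPT,SchneiderVenjakob} and \cite[\S3]{W}) it is a noetherian local domain whose unique maximal ideal is generated by a uniformiser, and it is integrally closed in its total ring of quotients, hence a maximal order over the centre of that total ring of quotients. Consequently $M_{\nFchi}\left(\OO_{D_\eta}[[X;\tauf,\tauf-\id]]\right)$ is a maximal $\cent(\DFchi)$-order, and therefore a maximal $\Lambda^{\OO_F}(\Gamma_0)$-order, in $M_{\nFchi}(\DFchi)$. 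Here one should double-check that $\OO_{D_\eta}[[X;\tauf,\tauf-\id]]$ is indeed module-finite over (the image of) $\Lambda^{\OO_F}(\Gamma_0)$, which follows because $\Gamma_0$ has finite index in $\Gcpf$ and $\OO_{D_\eta}$ is finite over $\OO_F$; this makes the notions ``maximal order over the centre'' and ``maximal $\Lambda^{\OO_F}(\Gamma_0)$-order'' coincide.

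Finally I would invoke the conjugacy of maximal orders: over a complete local base ring, or more generally whenever the relevant class groups vanish, any two maximal orders in a given central simple algebra are conjugate by a unit of the algebra. For $M_{\nFchi}(\DFchi)$ this says precisely that every maximal order has the form $u_\chi M_{\nFchi}\left(\OO_{D_\eta}[[X;\tauf,\tauf-\id]]\right) u_\chi^{-1}$ for some $u_\chi \in \GL_{\nFchi}(\DFchi) = \GL_{\nFchi}\left(\Quot\left(\OO_{D_\eta}[[X;\tauf,\tauf-\id]]\right)\right)$. Assembling these conjugates over the $\sim_F$-classes of $\chi$ gives the claimed description. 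The main obstacle is the conjugacy step: one must ensure that the hypotheses guaranteeing uniqueness up to conjugacy of maximal orders genuinely apply in this non-commutative Iwasawa-algebra setting — i.e.\ that the centre $\cent(\DFchi) \simeq \Q^{F_\chi}\left((\Gcpf)^{w_\chi/\vf}\right)$ and its ring of integers are ``good enough'' (complete local, or with trivial Picard/class group) for the Morita-theoretic reduction $M_{\nFchi}(\DFchi)$-maximal-orders $\leftrightarrow$ $\DFchi$-maximal-orders and for the conjugacy theorem. I would handle this by reducing to $\OO_{D_\eta}[[X;\tauf,\tauf-\id]]$ being a local ring and citing the fact that maximal orders over a complete local (or semilocal) Dedekind-like base are unique up to conjugacy, exactly as in the classical theory over complete discrete valuation rings.
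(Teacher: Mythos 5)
Your overall strategy (decompose along the primitive central idempotents, exhibit $M_{\nFchi}\left(\OO_{D_\eta}[[X;\tauf,\tauf-\id]]\right)$ as a distinguished maximal order via \cref{thm:main}, then argue that all maximal orders in each simple component are conjugate to it) is indeed the route the paper takes — its proof is a one-line reference to \cite[Corollary~4.11]{W}, whose argument is of exactly this shape. But as written your proposal has a genuine gap at the step you yourself flag as the main obstacle: the conjugacy statement you invoke is the classical one for orders over complete discrete valuation rings (or Dedekind domains with trivial class group), and its hypotheses are not satisfied here. The base ring $\Lambda^{\OO_F}(\Gamma_0)\simeq\OO_F[[T]]$ is a \emph{two-dimensional} regular local domain, and the centre $\cent(\DFchi)\simeq\Q^{F_\chi}\left((\Gcpf)^{w_\chi/\vf}\right)$ is the quotient field of a two-dimensional power series ring, not a complete discretely valued field; calling this base ``Dedekind-like'' does not make the dimension-one conjugacy theorem apply. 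Over two-dimensional bases, maximal orders in a fixed central simple algebra are in general only endomorphism rings of reflexive lattices, and conjugacy has to be extracted from additional structure: here one uses that $\OO_{D_\eta}[[X;\tauf,\tauf-\id]]$ is a local ring of finite global dimension, so that reflexive (equivalently, projective) lattices over it are free, whence every maximal order in $M_{\nFchi}(\DFchi)$ is of the form $u\,M_{\nFchi}(\OO_{D_\eta}[[X;\tauf,\tauf-\id]])\,u^{-1}$. This Auslander--Goldman/reflexive-module style argument (carried out in \cite{W} for Corollary~4.11 there) is the missing ingredient; without it the uniqueness-up-to-conjugacy is unproved.

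Two smaller points. First, ``integrally closed in its total ring of quotients, hence a maximal order'' is a non sequitur in the noncommutative setting: maximality of the order $\OO_{D_\eta}[[X;\tauf,\tauf-\id]]$ (and that it is an order at all, i.e.\ module-finite over a suitable central subring with the correct quotient skew field) is a substantive fact established in \cite[\S3--4]{W}, not a formal consequence of integral closedness. Second, your componentwise reduction is fine, but it rests on the fact that a maximal order over an integrally closed Noetherian domain contains the primitive central idempotents of the semisimple quotient algebra; this deserves an explicit citation rather than the complete-local-domain phrasing you give.
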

\begin{proof}
    Using \cref{thm:main}, the same proof as in \cite[Corollary 4.11]{W} applies.
\end{proof}

\section{Unramified examples} \label{sec:unramified}
We provide a set of examples in which $F(\eta)/F_\chi$ is a nontrivial unramified extension. Such extensions are not covered by the results of \cite{W}, but our general \cref{thm:main} applies. A special case of the construction below was presented in \cite[Example~2.4.12]{thesis}.

Let $p$ be a Sophie Germain prime, that is, a prime such that $2p+1$ is also prime. It is conjectured that there are infinitely many such primes \cite[\S5.5.5]{Shoup}. Since $2p+1$ is prime, there exists an integer $m$ such that $m^p\equiv 1\bmod{2p+1}$ but $m\not\equiv 1\bmod{2p+1}$. Then there exist distinct integers $1\le a\ne b\le 2p$ such that
\[\{1,2,\ldots,2p\} = \{a,am,\ldots,am^{p-1}\} \sqcup \{b,bm,\ldots,bm^{p-1}\},\]
where $\sqcup$ denotes disjoint union of sets.
Let $S_a\colonequals\sum_{i=0}^{p-1} \zeta_{2p+1}^{am^i}$ and $S_b\colonequals\sum_{i=0}^{p-1} \zeta_{2p+1}^{bm^i}$. Since the sum of primitive roots of unity of order $2p+1$ is $-1$, we have $S_a+S_b=-1$. Note that $S_a$ and $S_b$ are fixed by the automorphism of $\QQ_p(\zeta_{2p+1})$ given by $\zeta_{2p+1}\mapsto\zeta_{2p+1}^m$. This automorphism generates a subgroup of order $p$ in the cyclic group $\Gal(\QQ_p(\zeta_{2p+1})/\QQ_p)$ of order $2p$. Therefore by Galois theory, $\QQ_p(S_a)=\QQ_p(S_b)$ is a quadratic extension of $\QQ_p$.

Let $H$ be the cyclic group of order $2p+1$, and let $C_p=\langle g\rangle$ act on $H$ as the $m$th power map, that is, $ghg^{-1}=h^m$ for all $h\in H$. Let $\G\colonequals H\rtimes \Gamma$ where $\Gamma$ acts on $H$ via the natural projection $\Gamma\simeq\ZZ_p\twoheadrightarrow\ZZ_p/p\ZZ_p\simeq C_p$. Let $\chi\in\Irr(\G)$ be an irreducible character of degree $p$. Let $\eta\mid\res^G_H\chi$ be an irreducible constituent of its restriction to $H$. By definition of the group action, we have $w_\chi=p$.
By Clifford theory, see \cite[(3)]{NickelConductor}, we have the following for all $h\in H$:
\begin{equation} \label{eq:ex-res}
    \left(\res^\G_H\chi\right)(h)=\sum_{i=0}^{p-1} {}^{g^i}\eta(h)=\sum_{i=0}^{p-1} \eta\left(g^i h g^{-i}\right)=\sum_{i=0}^{p-1} \eta\left(h^{m^i}\right)=\sum_{i=0}^{p-1} \eta(h)^{m^i}.
\end{equation}
Since $\eta$ is a nontrivial character of the cyclic group of order $2p+1$, we have $\QQ_p(\eta)=\QQ_p(\zeta_{2p+1})$. Moreover, \eqref{eq:ex-res} combined with the discussion above shows that $\QQ_{p,\chi}$ is a quadratic extension of $\QQ_p$. The extension $\QQ_p(\zeta_{2p+1})/\QQ_p$ is unramified of degree $2p$, hence $\QQ_p(\eta)$ is an unramified extension of $\QQ_{p,\chi}$ of degree $p$. More generally, if $F/\QQ_p$ is a finite extension whose inertia degree is coprime to $p$, then $F(\eta)/F_\chi$ is also unramified of degree $p$.

\section{Dyadic Wedderburn decomposition} \label{sec:2adic}
Until now, we always assumed $p$ to be an odd prime. Now we briefly describe the extent to which the algebraic results above carry over to the $2$-adic setting, and highlight the obstacles to a full generalisation.
We emphasise the algebraic nature of these results: when it comes to arithmetic applications in Iwasawa theory, the case of $p=2$ presents significant challenges.

For this very reason, the works \cite{TEIT-II,NickelConductor} cited above, in which idempotents of $\Q^F(\G)$ are studied, also assume $p\ne2$.
Indeed, Ritter--Weiss assume the underlying prime to be odd, although the algebraic part (\S2) of their paper does not use this assumption. Nickel builds on Ritter--Weiss's work, and hence also assumes $p\ne2$, but once again, the results contained in the relevant part (\S1) of his article remain valid for $p=2$.

\subsection{Trivial Schur index}
The author's previous work \cite{W} on the totally ramified case also assumed $p=2$. This assumption is actually used, namely in the construction of extensions of Galois automorphisms to skew fields as described in \cref{extending-tau}. 
As explained at the end of \S4.2 of \cite{
W}, this relies on Witt's result \cite[Satz~10]{Witt} that the Schur index $s_\eta$ divides $p-1$, which is false for $p=2$. In this case, it is known that $s_\eta\in\{1,2\}$, and we have $s_\eta=1$ if one of the following conditions holds:
\begin{enumerate}
	\item the base field $F$ contains a primitive 4th root of unity \cite[Satz~11]{Witt},
	\item all $2$-Sylow subgroups of $H$ are abelian \cite{Yamada78}.
\end{enumerate}
In fact, Yamada \cite{Yamada2,Yamada2B} gave an explicit description of the cases when $s_\eta=2$.
Note that if $s_\eta=1$, then we have $D_\eta=F(\eta)$, so there is no need to extend automorphisms in $\Gal(F(\eta)/F_\chi)$ to $D_\eta$.
We can show that our results remain valid for each component satisfying $s_\eta=1$:
\begin{proposition}
	Let $p=2$, and let $F$ be a finite extension of $\QQ_2$. If $\chi\in\Irr(\G)$, and $\eta\mid\res^\G_H\chi$ is an irreducible constituent of its restriction with Schur index $s_\eta=1$, then 
	\begin{propositionlist}
		\item the size of the matrix ring is $\nFchi = \eta(1) \vf$,
		\item the Schur index of $\DFchi$ is $\sFchi = (F(\eta):F_\chi)$,
		\item the skew field part of the $\chi$-component is $\DFchi \simeq \Quot\left(\OO_{F(\eta)}[[X; \tauf, \tauf-\id]]\right)$.
	\end{propositionlist}
\end{proposition}
\begin{proof}
	First suppose that $F(\eta)/F_\chi$ is totally ramified. Let us trace through the proof of \cite[Theorem~6.1]{W}, in order to check that $\fetaj$ is an indecomposable idempotent in $\Q^F(\G)$. The reformulation of this claim in terms of linear algebra carries through: for this, one needs the automorphism $\pt$, which is present since $\tau$ already acts on $D_\eta=F(\eta)$ without any additional argument. Next, one considers the algebra
	\[\bigoplus_{i=1}^{p^{n_0}/\vf} \Q^{F(\eta)}(\Gamma_0)\left(\gamma_0^{\vf/p^{n_0}}\right)^{i}=\bigoplus_{j=0}^{w_\chi/\vf-1}(\Gamma^{w_\chi})\gamma^{\vf j} = \left(\Q^{F(\eta)}(\Gamma^{w_\chi})\big/\Q^{F_\chi}(\Gamma^{w_\chi}), \tau, \gamma^{w_\chi}\right),\]
	and the claimed indecomposability is equivalent to this algebra being a skew field.
	Since $s_\eta=1$, one can avoid using Lemma~6.3, Corollary~6.4, and Lemma~6.5 of \cite{W}. The claimed indecomposability then follows from Lemma~6.6: this relies on \cite[Corollary~2.11]{EN}, which is valid for $p=2$. One also needs the decomposition $\OO_{F_\chi}^\times\simeq\mu_{\#\overline{F_\chi}-1}\times U^1_{F_\chi}$, where $\overline{F_\chi}$ is the residue field of $F_\chi$. Observe that the emergence of $2$-power torsion elements in the group of $1$-units $U^1_{F_\chi}$ does not affect the validity of the argument. Therefore $\fetaj$ is indecomposable, as claimed.
	
	Now let $F(\eta)/F_\chi$ have arbitrary ramification. The arguments in \cref{sec:indecomposables-QF,sec:description} don't rely on the $p\ne2$ assumption beyond the fact that $\tau$ extends to $D_\eta$. Therefore the proofs of \cref{invariants-of-DFchi,cor:D} apply.
\end{proof}

\subsection{Nontrivial Schur index}
As we shall see below, there are indeed examples in which $s_\eta=2$ and \cref{extending-tau} fails. 
Let us explain the difficulties arising from this. 
The proof of \cref{recall-multiplication} involves comparing two actions on each side of
\begin{equation} \label{eq:FHeta}
	F[H]\epsilon_F(\eta) \simeq M_{n_\eta}(D_\eta).
\end{equation}
On the left hand side, we always have the action of conjugation by $\gamma^{\vf}$. On the right hand side, $\tauf$ acts coefficientwise, provided that it extends to $D_\eta$.
Even without such an extension, we can get an automorphism on the right hand side by pushing forward the conjugation action: this defines an automorphism $\delta_\gamma\in\Aut_{F_\chi}(D_\eta)$ of order $w_\chi/\vf$. However, this may only act on the matrix ring rather than the skew field $D_\eta$. In addition, even if $\delta_\gamma$ does act on $D_\eta$ (for instance when $n_\eta=1$), the resulting action need not preserve the fields $F(\eta)(\omega)$ and $F(\eta)(\pi)$, which also means that one has much less control of the images $\delta_\gamma(\omega)$ and $\delta_\gamma(\pi)$. Knowledge of these, however, is needed in the proof of \cref{thm:indecomposability-W} in the totally ramified case, see \cite[Corollary~6.4]{W}.

\subsection{Example}

We conclude this short excursion to the $2$-adic world by an explicit counterexample in which the automorphism cannot be extended to the skew field as in \cref{extending-tau}.

Consider the extension $\QQ_2(\sqrt2)/\QQ_2$: this is a totally ramified cyclic $2$-extension.
The only nontrivial Galois conjugation is given by $\tau(\sqrt2)=-\sqrt2$, and $\sqrt2$ is a uniformiser in $\QQ_2(\sqrt2)$.
Let $\omega$ be a root of unity of order $2^2-1=3$, and let $\sigma$ be the Frobenius, that is, $\sigma(\omega)\colonequals\omega^2$.
Consider the cyclic algebra 
\[D\colonequals\big(\QQ_2(\sqrt2,\omega)/\QQ_2(\sqrt2),\sigma,\sqrt2\big)=\QQ_2(\sqrt2,\omega)\oplus\QQ_2(\sqrt2,\omega)\sqrt[4]{2},\]
where $\pi\colonequals\sqrt[4]{2}$ acts as $\pi\omega\pi^{-1}=\sigma(\omega)$.
This is a skew field by \cite[Corollary~31.6]{MO}: indeed, $\sqrt2$ is a uniformiser, so $[\QQ_2(\sqrt2,\omega):\QQ_2(\sqrt2)]=2$ is coprime to $v_{\QQ_2(\sqrt2)}(\sqrt2)=1$. We claim that there is no extension of $\tau$ to $D$ as in \cref{extending-tau}. In fact, there is no extension to $\QQ_2(\sqrt[4]{2})$. Indeed, if there were, then by the explanation in \cite[Lemma 2.4]{W}, the quotient $\tau(\sqrt2)/\sqrt2=-1$ would admit a square root in $\QQ_2(\sqrt[4]{2})$. But $-1$ is not a square in $\QQ_2(\sqrt[4]{2})$: indeed, this is a totally ramified extension of $\QQ_2$, whereas $\QQ_2(\sqrt{-1})/\QQ_2$ is unramified.

We show that this example actually occurs in the study of Wedderburn decompositions of $2$-adic Lie groups. Let $F\colonequals\QQ_2$, and let 
\[H\colonequals Q_{16}=\langle a,b\mid a^8=b^4=1, a^4=b^2, bab^{-1}=a^{-1}\rangle\] 
be the generalised quaternion group of order 16. Let $C_2$ be the cyclic group of order $2$. This acts on $H$ as follows: let $c$ be the generator of $C_2$, and let $c$ act by $a\mapsto a^3$ and $b\mapsto b$. This defines a semidirect product $C_2\ltimes H$; its GAP SmallGroup ID is $(32,44)$. Letting $\Gamma$ act on $H$ via $\Gamma\twoheadrightarrow C_2$, we obtain $\G=\Gamma\ltimes H$. Let $\eta$ be either one of the two faithful two-dimensional characters of Schur index $2$ of $H$: then $\QQ_2(\eta)=\QQ_2(\sqrt2)$. Then $\chi\colonequals\ind^\G_H\eta$ is an irreducible degree~$4$ character factoring through $C_2\ltimes H$ with restriction $\res^\G_H\chi=\eta+{}^c\eta$, so $\QQ_{2,\chi}=\QQ_2$.

We give a partial description of a Wedderburn isomorphism \eqref{eq:FHeta}. It is clear from the construction of $\G$ that $w_\chi=2$, hence $\vf=1$. Since $s_\eta n_\eta=\eta(1)=2$, we have $n_\eta=1$. A quick computation shows that $\epsilon\colonequals\epsilon_{\QQ_2}(\eta)=\frac12(1-a^4)$, and therefore $a^4\epsilon=-\epsilon$ in $\QQ_2[H]\epsilon$. We now identify elements in $\QQ_2[H]\epsilon$ corresponding to $\sqrt2$ and $\omega$. Since $\sqrt2$ is central in $D_\eta$, there are exactly two square roots of $2$ in $D_\eta$, each corresponding to one of the two square roots of $2$ in $\QQ_2[H]\epsilon$, namely $\pm(a+a^7)\epsilon$. Let us assume $(a+a^7)\epsilon\leftrightarrow\sqrt2$. Since $\QQ_2(\sqrt2)$ contains no primitive third roots of unity, there are infinitely many such elements, which all differ by conjugation. One of them is $\tilde \omega\colonequals \frac12(-1+a^2+b+a^2b)\epsilon$; this is known as a Hurwitz unit, see \cite[\S11]{Voight}.
Pre- or postcomposing a Wedderburn isomorphism with conjugation produces another Wedderburn isomorphism, therefore a choice can be made, and we may assume $\tilde \omega\leftrightarrow\omega$.
We conclude that there exists a Wedderburn isomorphism under which
\[\QQ_2[H]\epsilon\simeq D_\eta, \quad (a+a^7)\epsilon\leftrightarrow\sqrt2, \quad \tilde \omega\leftrightarrow\omega.\]
An explicit description of the element corresponding to $\sqrt[4]{2}$ appears to be significantly more elusive.

Note that $\delta_\gamma$ does not preserve the inertia field $\QQ_2(\sqrt2,\omega)$, that is $\delta_\gamma(\QQ_2(\sqrt2,\omega))\ne\QQ_2(\sqrt2,\omega)$. Indeed, since $\delta_\gamma$ is an automorphism of $D_\eta$, it sends primitive third roots of unity to primitive third roots of unity. If it would preserve $\QQ_2(\sqrt2,\omega)$, conjugation by $\gamma$ would send $\tilde \omega$ either to itself or to $\tilde \omega^2=\frac12(-1-a^2-b-a^2b)$. However, $\gamma \tilde \omega\gamma^{-1}=\frac12(-1-a^2+b-a^2b)$ is neither.

\printbibliography
\end{document}